\numberwithin{equation}{section}
\newtheorem{Theorem}{Theorem}[section]
\newtheorem{Proposition}[Theorem]{Proposition}
\newtheorem{Lemma}[Theorem]{Lemma}
\newtheorem{Corollary}[Theorem]{Corollary}
\newtheorem{Definition}[Theorem]{Definition}
\newtheorem{Remark}[Theorem]{Remark}
\newtheorem{Notation}[Theorem]{Notation}
\numberwithin{equation}{section}
\newcommand{\rank}{\operatorname{rank}}
\newcommand{\Hom}{\operatorname{Hom}}
\def\mapright#1{\smash{ \mathop{\to}
    \limits^{#1}}}
    \newcommand{\Ext}{\operatorname{Ext}}
\begin{document}

\title[ moduli spaces of $\alpha$-stable coherent systems]{Some moduli spaces of $\alpha$-stable coherent systems on algebraic surfaces}

\author{L. Costa}
\address{Facultat de Matem\`atiques i Inform\`atica,
Departament de Matem\`atiques i Inform\'atica, Gran Via 
585, 08007 Barcelona, SPAIN } \email{costa@ub.edu}
\thanks{$^*$ Partially supported by PID2020-113674GB-I00.}

\author{I. Mac\'ias Tarr\'io}
\address{Facultat de Matem\`atiques i Inform\`atica,
Departament de Matem\`atiques i Inform\`atica, Gran Via
585, 08007 Barcelona, SPAIN} \email{irene.macias@ub.edu}
\thanks{$^{**}$ Partially supported by PID2020-113674GB-I00.}

\author{L. Roa-Leguizam\'on}
\address{Universidade Estadual de Campinas (UNICAMP) \\ Instituto de Matemática, Estatísitica e Computação Científica (IMECC) \\ Departamento de Matem\'atica \\
Rua S\'ergio Buarque de Holanda, 651\ \ 13083-970 Campinas-SP, Brazil.} \email{leoroale@unicamp.br}
\thanks{$^{***}$ partially supported by  FAPESP post-doctoral grant 2024/02475-0}

\subjclass[2010]{14J60, 14D20, 14H60}

\keywords{}

\date{\today}

\baselineskip=16pt

\begin{abstract}
Let $X$ be a smooth, irreducible, projective algebraic surface, and let $\alpha \in \mathbb{Q}[m]_{>0}$ be a polynomial. In this paper, we determine topological and geometric properties of the moduli space of $\alpha$-stable coherent systems of type $(n; c_{1}, c_{2}, k)$ with $k < n$ on $X$, for sufficiently large values of $\alpha$. We prove that, for $\alpha$ sufficiently large, the moduli space admits a description as a Grassmann bundle over a moduli space of $H$-stable torsion free sheaves. As a consequence, we obtain results on irreducibility, dimension, and the birational structure of these moduli spaces. Our approach relies on establishing a correspondence between $\alpha$-stable coherent systems and extensions of $H$-stable torsion free sheaves by trivial bundles.
\end{abstract}

\maketitle

\section{Introduction}

By a coherent system on an algebraic variety  we mean a pair $(E,V)$ where $E$ is torsion free sheaf together with a linear subspace $V \subset H^0(E)$of prescribed dimension.  These objects were  introduced in the 1990s by Le Potier \cite{LePotier}, Raghavendra and Vishwanath \cite{RV} (under the name of  `` Pairs'')  and King and Newstead \cite{KN} (under the name of ``Brill-Noether pair'') motivated by problems in Brill–Noether theory, Hilbert schemes and gauge theory. The theory was later developed in a systematic way by  Le Potier \cite{LePotier}, Min He \cite{He}, Bradlow, García-Prada, Mu\~{n}oz and Newstead \cite{BGMN}.  

Coherent systems provide a unifying framework for analyzing the relationship between torsion free sheaves and their spaces of sections, refining the classical Brill–Noether theory by incorporating stability conditions. This perspective has yielded profound insights into the geometry of moduli spaces of torsion free sheaves.  The notion of stability for coherent systems depends on a polynomial $\alpha \in \mathbb{Q}[m]$, whose  degree is strictly less than the dimension of the variety \cite[Th\'eoréme 4.2]{He}. Studying how the moduli space changes as $\alpha$ varies (the so-called wall-crossing behavior) has led to a detailed understanding of the birational geometry of moduli spaces and has provided explicit examples of flips and contractions in moduli theory (see for instance \cite{BGMN, CHCH, He, Th}). 

Coherent systems and their moduli spaces on curves have been extensively studied by several authors.  For instance: Bradlow, García-Prada, Mu\~{n}oz, Mercat, and Newstead determined general topological and geometric properties  \cite{BGMN, BGMN1, BGMMN, BGMMN2},  Lange and Newstead \cite{LN}, Teixidor and Newstead \cite{TN} studied  coherent systems on the projective line and on elliptic curves. Bhosle studied coherent systems on a nodal curve \cite{Bh}. Moreover, the study of moduli spaces of coherent systems on curves has been successful to study the existence of special divisors on curves and problems related to syzygy bundles on curves. For a recent account of the theory and the description of some open questions we refer to reader to \cite{B, N, N1}.

The passage from curves to higher-dimensional varieties raises subtle new challenges, both conceptual and technical. When dealing with torsion free sheaves on surfaces, stability conditions and moduli constructions are considerably more intricate: the geometry of surfaces introduces additional features. Indeed, the notion of stability of a torsion free sheaf depends on an ample divisor, and there is a rich wall-crossing behaviour among different moduli spaces, and in general the construction of stable bundles of higher rank is difficult. Despite these difficulties, extending the theory of coherent systems from curves to surfaces is of fundamental importance (see, for instance, \cite{CHCH, MTR, CMR}). It enriches the classical picture of the moduli of sheaves on surfaces and also provides a natural testing ground for conjectures inspired by higher-dimensional Brill--Noether theory.

In this work we contribute to the development of the theory of coherent systems on algebraic surfaces by describing the moduli space of $\alpha$-stable coherent systems for large values of $\alpha$. We focus on coherent systems $(E,V)$ with $k=\dim V < \rank E = n$.

We first show that if an $\alpha$-stable coherent system $(E,V)$ exists, then $\alpha$ is bounded above and satisfies
\[
\alpha \leq \frac{1}{n-k}\big(c_1Hm+\chi(E)-n\chi(\mathcal{O}_X)\big).
\]
(see Proposition \ref{boundalpha}). For values of $\alpha$ close to this bound, we prove that the moduli space is a Grassmann bundle over a suitable moduli space of stable bundles (see Theorems \ref{dimGL} and \ref{dimGL2}).

This description is obtained by establishing relations between coherent systems and $H$-stable torsion free sheaves. One such relation is given by the following result (see Proposition \ref{Fsemistable}):

\noindent\textbf{Proposition 1.}
Let $(E,V)$ be an $\alpha$-semistable coherent system of type $(n;c_1,c_2,k)$ with $k<n$ and
\[
\alpha=\frac{1}{n-k}\big(c_1Hm+\chi(E)-n\chi(\mathcal{O}_X)\big)-\frac{1}{n-k}\epsilon,
\]
where $\epsilon>0$ is sufficiently small. Then $(E,V)$ defines an extension
\[
0\rightarrow \mathcal{O}_X^{\oplus k}\rightarrow E\rightarrow F\rightarrow 0
\]
where $F$ is an $H$-semistable (Mumford--Takemoto) torsion free sheaf.

\medskip

\noindent In the other direction we prove the following result (see Proposition \ref{E-stable}).

\medskip

\noindent\textbf{Proposition 2.}
Let $e=(e_1,\ldots,e_k)\in H^1(F^*)^k$ be the extension class of the exact sequence
\[
e:\; 0\to \mathcal{O}_X^{\oplus k}\to E\to F\to 0,
\]
where the $e_i$ are linearly independent in $H^1(F^*)$. Assume that $F$ is an $H$-stable torsion free sheaf of rank $n-k>0$. Then $e$ defines an $\alpha$-stable coherent system $(E,V)$ for any
\[
\alpha=\frac{1}{n-k}\big(c_1Hm+\chi(E)-n\chi(\mathcal{O}_X)\big)-\frac{1}{n-k}\epsilon,
\]
where $\epsilon>0$ is sufficiently small and satisfies $\epsilon k<n$.

Using the previous result we establish our main theorem (see Theorem \ref{dimGL}).

\noindent\textbf{Theorem 1. }
Let $n, k \in \mathbb{N}$ be  natural numbers with $k<n$, $X$  a smooth projective surface and $H$ an ample  divisor on $X$. Fix Chern classes $c_1$ and $c_2 \gg 0$ and  assume that $$((n-k)K_X+c_1)H \leq 0.$$ Then, the space $G_L^s(n;c_1,c_2,k)$ is isomorphic to a Grassmann bundle over $M_H(n-k;c_1,c_2)$. Moreover, it is irreducible with dimension
$$  \dim G_L^s(n;c_1,c_2,k)=(2n-k)c_2 +(1- n(n-k))\chi(\mathcal{O}_X)-k^2-k(\frac{c_1K}{2}+\frac{c_1^2}{2})-(n-k-1)c_1^2.$$

\vspace{3mm}

In addition, we provide examples where the moduli space of coherent systems is generically smooth (see Proposition \ref{smooth}).

The paper is organized as follows. In Section 2 we review the background on coherent systems on surfaces needed in the sequel. Section 3 contains our main results concerning moduli spaces of $\alpha$-stable coherent systems of type $(n;c_1,c_2,k)$ with $k<n$.

Throughout the paper we work over an algebraically closed field of characteristic $0$, and $X$ denotes a smooth, irreducible, projective algebraic surface.

\section{Coherent Systems}

 The aim of this section is to recall the basic definitions and results on coherent systems on surfaces that will be used throughout the paper. Further details can be found in \cite{He} and \cite{LePotier}.

\begin{Definition}\rm  Let $X$ be a smooth irreducible projective surface. 
    \begin{enumerate}
    \item[(1)] A \it{coherent system of dimension $d$ on $X$} is a pair $(\mathcal{E},\mathcal{V})$, where $\mathcal{E}$ is a coherent sheaf
 of dimension $d$ on $X$,  and
$\mathcal{V} \subseteq H^{0}(X,\mathcal{E})$ is a vector subspace.
\item[(2)] A \it{morphism of coherent systems} $(\mathcal{E}_1,\mathcal{V}_1) \to (\mathcal{E}_2,\mathcal{V}_2)$ is a morphism of coherent sheaves $\phi: \mathcal{E}_1 \to \mathcal{E}_2$ such that $H^0(\phi)(\mathcal{V}_1) \subset \mathcal{V}_2$.
\item[(3)]  A coherent \it{subsystem} of $(\mathcal{E},\mathcal{V})$ is a pair $(\mathcal{F},\mathcal{W})$ 
where $0 \neq \mathcal{F} \subset \mathcal{E}$ is a proper subsheaf of 
$\mathcal{E}$ and $\mathcal{W} \subseteq \mathcal{V}\cap H^{0}(X,\mathcal{F})$.
\item[(4)] A \it{quotient} coherent system  of $(\mathcal{E},\mathcal{V})$ is a coherent system $(\mathcal{G},\mathcal{Z})$ together with a morphism 
 $\phi:(\mathcal{E},\mathcal{V}) \to (\mathcal{G},\mathcal{Z})$ of coherent systems such that the morphism $\phi: \mathcal{E} \to \mathcal{G}$ is surjective and $H^0(\phi)(\mathcal{V})=\mathcal{Z}$.
\end{enumerate}
\end{Definition}

In this paper, we restrict our attention to coherent systems of dimension $2$, that is, to pairs $(E,V)$ where $E$ is a torsion free sheaf. For simplicity, we will denote such systems by $(E,V)$. 

\begin{Remark} \rm
 If $(E,V)$ is a coherent system, then any coherent subsystem $(F,W) \subset (E,V)$ has $F$ torsion free.
\end{Remark}

\begin{Definition} \rm 
A coherent system $(E,V)$ is said to be of type $(n; c_1,c_2, k)$ if $E$ is a  torsion free sheaf of rank $n$ with Chern classes
$c_i \in H^{2i}(X,\mathbb{Z})$ for $i=1,2$ and $V \subset H^0(X,E)$ is a subspace of dimension $k$.
\end{Definition}


\vspace{3mm}

Associated to the coherent systems there is a notion of stability which depends on a parameter $\alpha \in \mathbb{Q}[m]$. To introduce it, it is convenient to fix some notation.

\begin{Notation} \rm
We denote by $\mathbb{Q}[m]$ the space of polynomials in the variable $m$ with coefficients in $\mathbb{Q}$.  
Given two polynomials $p_1, p_2 \in \mathbb{Q}[m]$, we write $p_1 \leq p_2$ if and only if
$p_1(m) - p_2(m) \leq 0$ for $m \gg 0$.  
We say that $\alpha \in \mathbb{Q}[m]_{>0}$ if $\alpha(m) > 0$ for $m \gg 0$. 
    \end{Notation}

    \begin{Definition} \rm
   Let   $\alpha \in \mathbb{Q}[m]_{>0}$ and $H$ an ample divisor  on $X$. Given a coherent system $(E,V)$ of type $(n;c_1,c_2,k)$ on $X$, we define its {\it reduced Hilbert polynomial} by
    \begin{eqnarray*}
P^{\alpha}_{H,(E,V)}(m)=\frac{k}{n}\cdot \alpha +\frac{P_{H,E}(m)}{n},
\end{eqnarray*}
where $P_{H,E}(m)$ denotes the Hilbert polynomial of $E$.
    \end{Definition}

    Notice that, since $X$ is a smooth projective surface, the Hilbert polynomial with respect to $H$ of a  torsion free sheaf $E$ of rank $n$ on $X$ with Chern classes $c_1$ and $c_2$ can be written as follows:  
\[\begin{array}{ll} 
\frac{P_{H,E}(m)}{n} &=\frac{H^2 m^2}{2} + \left[ \frac{c_1 H}{n}- \frac{K_X H}{2} \right]m 
+ \frac{1}{n}\left(\frac{c_1^2-(c_1 K_X)}{2}-c_2 \right)+\chi(\mathcal{O}_X)\\ 
&=\frac{H^2 m^2}{2} + \left[ \frac{c_1 H}{n}- \frac{K_X H}{2} \right]m+ \frac{\chi(E)}{n}\\
\end{array} \]
where $K_X$ denotes the canonical divisor of $X$ and $\chi(F)$ the Euler Characteristic of a sheaf $F$.
 
\begin{Definition} \rm 
Let   $\alpha \in \mathbb{Q}[m]_{>0}$ and $H$ be an ample divisor  on $X$. We say that $(E,V)$ is $\alpha$-stable (resp. $\alpha$-semistable) if for any proper coherent subsystem $0 \neq (F,W)\subset (E,V)$
the following inequality holds
\begin{equation} \label{stability}
    P^{\alpha}_{H,(F,W)}(m) < P^{\alpha}_{H,(E,V)}(m) \,\,\,\,\,  (resp. \leq).
\end{equation}
\end{Definition}

\begin{Definition} \rm 
    Let $\alpha\in\mathbb{Q}[m]_{>0}$. We say that $\alpha$ is a regular value if there exists $\beta_1, \beta_2 \in \mathbb{Q}[m]_{>0}$ with $\beta_1<\alpha < \beta_2$  such that if $(E,V)$ is $\alpha$-stable, then $(E,V)$ is $\gamma$-stable  for any  $\gamma\in(\beta_1,\beta_2)$. If $\alpha$ is not a regular value we say that it is a critical value.
\end{Definition}

\begin{Notation} \rm
    According to \cite{He}; Theorem 4.2, given $(n;c_1,c_2,k)$ there is a finite number of critical values
    \[0 = \alpha_0 < \alpha_1 < \cdots < \alpha_s\]
    with $\alpha_i \in \mathbb{Q}[m]_{>0} $ and  in addition $\deg(\alpha_i) < \dim(X)$. 
\end{Notation}

\section{\texorpdfstring{$\alpha$}{alpha}-stable coherent systems for large \texorpdfstring{$\alpha$}{alpha}}

The first goal of this section is to obtain an upper bound $c$ for the value of $\alpha_s$ for coherent systems of type $(n;c_1,c_2,k)$, $k<n$. Once this will be established, we will describe the moduli space of $\alpha$-stable coherent systems for $\alpha$ close to this bound $c$ and we will relate it with a moduli space of stable bundles (in the sense of Mumford-Takemoto). 

\vspace{3mm}
To this end, first observe that any coherent system fits into an exact sequence of the form
\begin{equation} \label{sucEx} 0 \to N \to V \otimes \mathcal{O}_X \mapright{\varphi} E \to F \to 0\end{equation}
where $N$ is a torsion free sheaf and $F$ is a coherent sheaf, possibly with torsion. This motivates the following definition.

\begin{Definition} \rm
    A coherent system $(E,V)$ is said to be generated if the valuation map $V\otimes \mathcal{O}_X \to E$ is surjective.
\end{Definition}


\begin{Proposition} \label{generated}
  Let $H$ be an ample divisor and let $(E,V)$ be a coherent system of type $(n; c_1,c_2,k)$.
  \begin{itemize}
      \item[(a)] If $(E,V)$ is  generated, then  $ 0\leq c_1Hm+\chi(E)-n\chi(\mathcal{O}_X)$. In particular $c_1H \geq 0$. 
      \item[(b)]   If $k<n$ and $(E,V)$ is $\alpha$-semistable for some $\alpha \in \mathbb{Q}[m]_{>0}$, then $c_1H \geq 0$.
  \end{itemize}
\end{Proposition}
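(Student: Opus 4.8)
The plan is to first recast the inequality in part (a) in a more transparent form. Expanding the Hilbert polynomials as in the formula preceding the statement, one checks the identity $c_1Hm+\chi(E)-n\chi(\mathcal{O}_X)=P_{H,E}(m)-n\,P_{H,\mathcal{O}_X}(m)$, so that (a) is equivalent to the assertion $P_{H,E}(m)\geq n\,P_{H,\mathcal{O}_X}(m)$ for $m\gg 0$. Since $(E,V)$ is generated, the evaluation map $V\otimes\mathcal{O}_X\to E$ is surjective, hence an isomorphism onto $E$ at the generic point $\eta$ of $X$. I would therefore choose $n$ general sections $s_1,\dots,s_n\in V$ whose images form a basis of the generic fibre $E_\eta$; these induce a morphism $\mathcal{O}_X^{\oplus n}\to E$ that is an isomorphism at $\eta$, hence injective (its kernel is a torsion subsheaf of the torsion free sheaf $\mathcal{O}_X^{\oplus n}$, so it vanishes) with torsion cokernel $T$. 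This yields a short exact sequence
\begin{equation*}
0\to\mathcal{O}_X^{\oplus n}\to E\to T\to 0,
\end{equation*}
with $T$ supported on a proper closed subset of $X$.

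Twisting by $\mathcal{O}_X(mH)$ and taking Euler characteristics gives $\chi(E(m))=n\,\chi(\mathcal{O}_X(m))+\chi(T(m))$. By Serre vanishing applied to the coherent sheaf $T$ one has $H^i(T(m))=0$ for $i>0$ and $m\gg 0$, so $\chi(T(m))=h^0(T(m))\geq 0$; consequently $P_{H,E}(m)\geq n\,P_{H,\mathcal{O}_X}(m)$ for $m\gg 0$, which is (a). The final clause follows at once: the difference $c_1Hm+\chi(E)-n\chi(\mathcal{O}_X)$ is a polynomial of degree at most one in $m$ with leading coefficient $c_1H$, and a polynomial that is nonnegative for $m\gg 0$ cannot have negative leading coefficient, so $c_1H\geq 0$.

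For part (b) the idea is to reduce to the generated case via the subsystem generated by $V$. Let $E'\subseteq E$ be the subsheaf generated by the image of the evaluation map, so that $(E',V)$ is a generated coherent subsystem of $(E,V)$ of some type $(n';c_1',c_2',k)$ with $n'=\operatorname{rank}E'\leq k<n$; here $V\subseteq H^0(E')$, and since the ranks differ, $(E',V)$ is a proper nonzero subsystem. Applying $\alpha$-semistability to this subsystem gives, for $m\gg 0$,
\begin{equation*}
\frac{k}{n'}\alpha+\frac{P_{H,E'}(m)}{n'}\;\leq\;\frac{k}{n}\alpha+\frac{P_{H,E}(m)}{n},
\end{equation*}
and rearranging yields $\dfrac{P_{H,E}(m)}{n}\geq\dfrac{P_{H,E'}(m)}{n'}+k\Bigl(\dfrac{1}{n'}-\dfrac{1}{n}\Bigr)\alpha$.

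The last term is nonnegative for $m\gg 0$, because $n'\leq n$ and $\alpha\in\mathbb{Q}[m]_{>0}$, so $\dfrac{P_{H,E}(m)}{n}\geq\dfrac{P_{H,E'}(m)}{n'}$ for $m\gg 0$. Feeding in part (a) for the generated system $(E',V)$, namely $\dfrac{P_{H,E'}(m)}{n'}\geq P_{H,\mathcal{O}_X}(m)$, I obtain $\dfrac{P_{H,E}(m)}{n}\geq P_{H,\mathcal{O}_X}(m)$ for $m\gg 0$; comparing the leading-in-$m$ coefficients of this inequality, exactly as in (a), forces $c_1H\geq 0$. The step requiring the most care is precisely this handling of the $\alpha$-term: since $\deg\alpha$ may equal one, a direct comparison of the degree-one coefficients of the two reduced Hilbert polynomials is inconclusive (it only bounds $c_1H$ from below by a negative quantity), and it is essential to use the full polynomial inequality from (a) rather than a coefficientwise comparison. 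The other delicate point, in (a), is justifying the generic isomorphism $\mathcal{O}_X^{\oplus n}\hookrightarrow E$ and the torsion nature of its cokernel, which is where the hypothesis that $(E,V)$ is generated (not merely that $E$ has many sections) enters.
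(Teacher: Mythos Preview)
Your proof is correct in both parts, but your route through (a) is genuinely different from the paper's. The paper simply observes that a generated $(E,V)$ exhibits $E$ as a quotient of the $H$-semistable sheaf $\mathcal{O}_X^{\oplus k}$, and then invokes the standard fact that a (pure, same-dimensional) quotient of a semistable sheaf has reduced Hilbert polynomial at least that of the original; this immediately gives $P_{H,\mathcal{O}_X}(m)\le P_{H,E}(m)/n$. Your argument instead selects $n$ sections from $V$ that are linearly independent at the generic point, builds an explicit injection $\mathcal{O}_X^{\oplus n}\hookrightarrow E$ with torsion cokernel $T$, and reads off the inequality from $P_{H,T}(m)\ge 0$. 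Both are short; the paper's version is one line but uses the semistability machinery, while yours is more hands-on and makes the ``error term'' $T$ visible. Note that your choice of $n$ sections in $V$ tacitly uses $k\ge n$, which is automatic once $(E,V)$ is generated (the evaluation map is surjective at the generic point); this is worth saying explicitly, since the paper's standing hypothesis elsewhere is $k<n$, and part~(a) only has content for subsystems where that hypothesis is dropped.

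For (b) your argument and the paper's are essentially the same: both pass to the generated subsystem $(E',V)$, apply $\alpha$-semistability, and combine with (a). The paper writes $\alpha=am+b$ and compares degree-one coefficients directly, whereas you keep the full polynomial inequalities and extract the leading coefficient at the end. One small correction: your closing remark that ``a direct comparison of the degree-one coefficients \dots\ is inconclusive (it only bounds $c_1H$ from below by a negative quantity)'' is not accurate. Comparing degree-one coefficients yields $\tfrac{c_1H}{n}\ge \tfrac{c_1'H}{n'}+ka\bigl(\tfrac{1}{n'}-\tfrac{1}{n}\bigr)$, and since $n'<n$ and $a\ge 0$ the $\alpha$-contribution is nonnegative; together with $c_1'H\ge 0$ from (a) this already forces $c_1H\ge 0$. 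This is exactly what the paper does. Your polynomial-level argument is perfectly valid, just not strictly necessary to sidestep a difficulty here.
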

\begin{proof} $(a)$ Let $(E,V)$ be a generated coherent system.  Here, $E$  is a quotient of  the sheaf $V\otimes \mathcal{O}_X:= \mathcal{O}^{\oplus k}_X$ which is $H$-semistable. Thus, 
    \[  \frac{P_{H,\mathcal{O}^{\oplus k}_X}(m)}{k} \leq \frac{P_{H,E}(m)}{n} \]
        which is equivalent to $ 0  \leq c_1Hm+\chi(E)-n\chi(\mathcal{O}_X)$. In particular, the coefficient $c_1H$ must be positive, as claimed.
        
        \noindent $(b)$ Let $E'$ be a torsion free sheaf generated by $V$. In particular, $n_{E'}=\rank E' \leq k <n$. Thus, $(E',V)$ is a subsystem of  $(E,V)$. Say that $(E',V)$ is of type $(n_{E'}, c'_1,c'_2, v)$ with $0<n_{E'}< n$. Let $\alpha=am+b \in \mathbb{Q}[m]_{>0}$. Since $(E,V)$ is $\alpha$-semistable we have $p_{H,(E',V)}^{\alpha}(m) \leq p_{H,(E,V)}^{\alpha}(m)$ which implies $(c'_1H+av)/n_{E'} \leq (c_1H+av)/n$. Since $n_{E'}<n$, this implies $c'_1H/n_{E'}\leq c_1H/n$.  Since $(E',V)$ is generated,  statement $(a)$ implies that $0 \leq c'_1H/n_{E'} \leq c_1H/n$ as  desired. 
    \end{proof}
        


%

Next result will give us the desired bound $c$ for $\alpha_s$ for coherent systems of type $(n;c_1,c_2,k)$, $k<n$. 

\begin{Proposition} \label{boundalpha}
Let $H$ be an ample divisor and let $(E,V)$ be a coherent system of type $(n;c_1,c_2,k)$ with $k<n$.  If $(E,V)$ is $\alpha-$semistable for some $\alpha>0$, then 
 \[\alpha  \leq \frac{1}{n-k}(c_1Hm+\chi(E)-n\chi(\mathcal{O}_X)).\]
\end{Proposition}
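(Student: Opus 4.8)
The plan is to manufacture a proper subsystem of $(E,V)$ whose reduced Hilbert polynomial, compared against that of $(E,V)$ through $\alpha$-semistability, forces the stated inequality; the natural test object is the subsheaf generated by the sections themselves. Concretely, I would let $E'$ be the image of the evaluation map $V\otimes\mathcal{O}_X\to E$, so that $(E',V)$ is a generated coherent system with $V\subseteq H^0(X,E')$. Because $E'$ is a quotient of $\mathcal{O}_X^{\oplus k}$ and a nonzero subsheaf of the torsion free sheaf $E$, its rank $r:=\rank E'$ satisfies $1\le r\le k$; and since $k<n$ we get $r<n$, so $(E',V)$ is a genuine proper subsystem, of type $(r;c_1',c_2',k)$. (It is worth noting that a system of type $(n;c_1,c_2,k)$ with $k<n$ can never itself be generated, which is exactly what guarantees properness here.)

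The first key step is to feed $(E',V)$ into Proposition \ref{generated}(a). Since $(E',V)$ is generated, that statement gives $0\le c_1'Hm+\chi(E')-r\chi(\mathcal{O}_X)$, which, using the displayed formula for the Hilbert polynomial, rewrites as $\tfrac{1}{r}P_{H,E'}(m)\ge P_{H,\mathcal{O}_X}(m)$ after recognizing $c_1'Hm+\chi(E')-r\chi(\mathcal{O}_X)=P_{H,E'}(m)-rP_{H,\mathcal{O}_X}(m)$. Independently, applying the $\alpha$-semistability inequality \eqref{stability} to the subsystem $(E',V)\subset(E,V)$ yields
\[
\Big(\tfrac{k}{r}-\tfrac{k}{n}\Big)\alpha\ \le\ \tfrac{1}{n}P_{H,E}(m)-\tfrac{1}{r}P_{H,E'}(m).
\]
Substituting $\tfrac{1}{r}P_{H,E'}(m)\ge P_{H,\mathcal{O}_X}(m)$, using the identity $\tfrac{1}{n}P_{H,E}(m)-P_{H,\mathcal{O}_X}(m)=\tfrac{1}{n}\big(c_1Hm+\chi(E)-n\chi(\mathcal{O}_X)\big)$, and noting $\tfrac{k}{r}-\tfrac{k}{n}=\tfrac{k(n-r)}{rn}$, I would arrive at the intermediate estimate
\[
\alpha\ \le\ \frac{r}{k(n-r)}\big(c_1Hm+\chi(E)-n\chi(\mathcal{O}_X)\big).
\]

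Finally I would pass from this sharper but $r$-dependent bound to the clean one. Since $\alpha>0$ and $\tfrac{r}{k(n-r)}>0$, the estimate forces $c_1Hm+\chi(E)-n\chi(\mathcal{O}_X)>0$ for $m\gg0$. The elementary inequality $\tfrac{r}{k(n-r)}\le\tfrac{1}{n-k}$ is equivalent to $r(n-k)\le k(n-r)$, i.e.\ to $r\le k$, which holds; multiplying it by the now-positive quantity $c_1Hm+\chi(E)-n\chi(\mathcal{O}_X)$ and chaining with the intermediate estimate gives precisely $\alpha\le\tfrac{1}{n-k}(c_1Hm+\chi(E)-n\chi(\mathcal{O}_X))$.

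The main obstacle, and the only genuinely non-routine point, is identifying the right test subsystem: one must realize that the subsheaf generated by $V$ simultaneously (i) is automatically proper because $k<n$, (ii) has the largest possible sections-to-rank ratio $k/r\ge 1>k/n$, which makes the $\alpha$-coefficient positive and thereby turns the stability inequality into an \emph{upper} bound for $\alpha$, and (iii) is generated, so that Proposition \ref{generated}(a) supplies the positivity controlling the right-hand side. After that the argument is bookkeeping, the only mild subtlety being that the cleanest constant $\tfrac{1}{n-k}$ is obtained by relaxing the sharp constant $\tfrac{r}{k(n-r)}$, which is legitimate precisely because the relevant polynomial is positive for $m\gg0$.
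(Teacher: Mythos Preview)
Your proof is correct and follows essentially the same route as the paper: take $E'=\mathrm{Im}(V\otimes\mathcal{O}_X\to E)$ as the test subsystem, apply Proposition~\ref{generated}(a) to the generated system $(E',V)$, and combine with $\alpha$-semistability. The only cosmetic difference is that the paper chains the two inequalities as $\alpha+\chi(\mathcal{O}_X)\le p^\alpha_{H,(E',V)}\le p^\alpha_{H,(E,V)}$ and solves directly for $\alpha$, whereas you first isolate the sharper $r$-dependent bound $\alpha\le\frac{r}{k(n-r)}(c_1Hm+\chi(E)-n\chi(\mathcal{O}_X))$ and then relax using $r\le k$; both maneuvers encode the same inequality $\alpha\le\alpha\,k/r$.
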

\begin{proof} 
    Let $E'$ be a torsion free sheaf generated by $V$.  Then $(E',V) \subset (E,V)$ is of type $(n'; c_1(E'),c_2(E'), k)$ with $n' \leq k$. By Proposition \ref{generated} (a), since $(E',V)$ is generated  we have
     \[\alpha+\chi(\mathcal{O}_X) \leq \alpha \frac{k}{n'}+ \frac{c_1(E')H}{n'}m+\frac{\chi(E')}{n'}.\]
     On the other hand, since  $(E,V)$ is $\alpha-$semistable, we have $P^\alpha_{H,(E',V)}(m) \leq P^\alpha_{H,(E,V)}(m)$. Hence, putting all together we get
\[        \alpha+\chi(\mathcal{O}_X) \leq \alpha \frac{k}{n'}+ \frac{c_1(E')H}{n'}m+\frac{\chi(E')}{n'} \leq \alpha \frac{k}{n}+ \frac{c_1H}{n}m+\frac{\chi(E)}{n}\]
which implies  
\[\alpha  \leq \frac{1}{n-k}(c_1Hm+\chi(E)-n\chi(\mathcal{O}_X)).\]
\end{proof}

\begin{Remark} \label{cota c1} \rm 
Let $(E,V)$ be a coherent system of type $(n;c_1,c_2,k)$ with $k<n$. Since we are interested in $\alpha$-semistable coherent systems  with respect to polynomials of degree less or equal than one, and not only with respect to constants, according to Proposition \ref{generated} (b), from now on we will assume $c_1H>0$.  In particular, $ 0 < c_1Hm+\chi(E)-n\chi(\mathcal{O}_X)$.
\end{Remark}



Our next goal is to prove that for values of $\alpha$ big enough, an $\alpha$-stable coherent system $(E,V)$ defines an exact sequence as (\ref{sucEx}) with $N=0$ and $F$ an $H$-stable torsion free sheaf. To this end, let us start with the following technical lemma. 

\begin{Lemma}\label{epsilon}
Let $n,k,p,p' \in \mathbb{N}$ with $k\leq n$, $p,p'>0$ and $d,q,q' \in \mathbb{Q}[m]$.
  Let  \[\epsilon(p,q)=\left(\frac{n}{k}\right)\left(\frac{dp+(n-k)q}{n-k+p}\right).\]
Then,  
\begin{itemize}
    \item[(i)]   If $d-q>0$, then  $\epsilon(p,q)\leq \epsilon(p',q)$ if and only if $p\leq p'$;
    \item [(ii)] $\epsilon(p,q)\leq \epsilon(p,q')$ if and only if $q\leq q'$.
\end{itemize} 
\end{Lemma}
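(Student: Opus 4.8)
The plan is to prove both equivalences by a single direct computation: form the relevant difference of the two $\epsilon$-values, clear denominators, and factor. Since the scalar $n/k$ is a positive constant it never affects any of the inequalities (which are compared for $m \gg 0$ in the sense of the Notation), so throughout I would work with $f(p,q) := \frac{dp+(n-k)q}{n-k+p}$ and abbreviate $a := n-k$. Note that $a+p$ and $a+p'$ are positive constants because $p,p'>0$ and $a \ge 0$, and that the essential input is $a>0$, i.e.\ $k<n$, which is the standing assumption on the type of our coherent systems; if $k=n$ then $f$ is independent of both $p$ and $q$ and neither equivalence can hold, so $a>0$ is genuinely needed (the hypothesis should be read as $k<n$).

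For part (i), fixing $q$ I would compute
\begin{align*}
f(p,q)-f(p',q) = \frac{dp+aq}{a+p}-\frac{dp'+aq}{a+p'} = \frac{a(d-q)(p-p')}{(a+p)(a+p')},
\end{align*}
the key point being that upon putting everything over the common denominator the cross terms $a^2q$ and $dpp'$ cancel, leaving the clean factorization on the right. The denominator $(a+p)(a+p')$ is a positive constant, and by hypothesis $d-q>0$ for $m\gg 0$ while $a>0$; hence $a(d-q)>0$ for $m\gg 0$, and the sign of the whole expression for $m\gg 0$ agrees with the sign of the rational constant $p-p'$. After multiplying back by $n/k>0$ this gives $\epsilon(p,q)\le \epsilon(p',q)$ if and only if $f(p,q)-f(p',q)\le 0$ if and only if $p-p'\le 0$, which is exactly $p\le p'$.

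For part (ii), fixing $p$ the computation is even shorter since the two fractions share the denominator $a+p$:
\begin{align*}
f(p,q)-f(p,q') = \frac{dp+aq}{a+p}-\frac{dp+aq'}{a+p} = \frac{a(q-q')}{a+p}.
\end{align*}
Again $a+p>0$ and $a>0$, so the sign of this expression for $m\gg 0$ is the sign of the polynomial $q-q'$; multiplying by $n/k>0$ yields $\epsilon(p,q)\le \epsilon(p,q')$ if and only if $q\le q'$. Notice that, in contrast to (i), no hypothesis on $d-q$ is needed here.

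The delicate points are bookkeeping rather than conceptual. First, one must carry out the algebraic cancellation in (i) correctly so that the factor $(d-q)$ actually emerges; this is the single place where the assumption $d-q>0$ is used, and it is precisely what allows the constant $p-p'$ to dictate the sign. Second, one must remember throughout that every comparison is a comparison of polynomials \emph{for $m\gg 0$}, so that multiplying by the positive rational constants $n/k$ and $(a+p)(a+p')$ preserves each inequality and the sign is governed entirely by the constant factor $p-p'$ (respectively $q-q'$). I expect no serious obstacle beyond verifying the factorization in (i).
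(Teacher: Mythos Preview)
Your proof is correct and follows exactly the paper's approach: the paper records that (i) reduces to the equivalence $\epsilon(p,q)\le\epsilon(p',q)\iff (p-p')(d-q)\le 0$ and that (ii) is a direct computation, which is precisely the factorization you carry out in detail. Your remark that the standing hypothesis $k<n$ (so $a=n-k>0$) is genuinely needed is well observed, since the lemma's statement only says $k\le n$.
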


    

\begin{proof}
Since $d-q>0$, $(i)$  follows from the fact that $ \epsilon(p,q) \leq \epsilon(p',q)$ if and only if $(p - p')(d - q) \leq 0$. The equivalence $(ii)$ is a direct computation. 
\end{proof}



\begin{Proposition}
     Let $(E,V)$ be a coherent system of type $(n;c_1,c_2,k)$ with $k<n$ which is $\alpha$-semistable for some $\alpha>0$. Then,
there exists $\alpha_I  \in \mathbb{Q}[m]_{>0}$ with  $$ 0 < \alpha_I \leq \frac{1}{n-k}(c_1Hm+\chi(E)-n\chi(\mathcal{O}_X))$$ such that, if $\alpha>\alpha_I$, the map $\varphi: V \otimes \mathcal{O}_X \to E$ is injective.
\end{Proposition}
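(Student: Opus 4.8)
The plan is to factor the evaluation map through its image and translate the failure of injectivity into the existence of a low-rank generated subsystem, which then violates $\alpha$-stability as soon as $\alpha$ is large. First I would set $E' := \operatorname{Im}(\varphi) \subseteq E$, so that \eqref{sucEx} restricts to a short exact sequence $0 \to N \to V\otimes\mathcal{O}_X \to E' \to 0$ with $N$ torsion free, being a subsheaf of $V\otimes\mathcal{O}_X \cong \mathcal{O}_X^{\oplus k}$. Writing $n' := \rank(E')$, the map $\varphi$ is injective if and only if $N=0$, i.e.\ if and only if $n' = k$; since $E'$ is a quotient of $\mathcal{O}_X^{\oplus k}$ we always have $n' \le k$, and since $E'\ne 0$ we have $n'\ge 1$. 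Thus it suffices to show that for $\alpha$ large the case $n' \le k-1$ cannot occur. In that case $(E',V)$ is a generated, \emph{proper} subsystem of $(E,V)$ of type $(n'; c_1(E'), c_2(E'), k)$, proper because $n' \le k-1 < n$.

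Next I would extract two inequalities. On one hand, $\alpha$-semistability applied to the subsystem $(E',V)$ gives $p^\alpha_{H,(E',V)} \le p^\alpha_{H,(E,V)}$; after cancelling the common $\tfrac{H^2m^2}{2}$ and $-\tfrac{K_XH}{2}m$ terms and writing $D_E := c_1Hm + \chi(E)$ and $D_{E'} := c_1(E')Hm + \chi(E')$, this rearranges to
$$\alpha \le \frac{n'D_E - nD_{E'}}{k(n-n')},$$
the denominator being positive since $n' < n$. On the other hand, since $(E',V)$ is generated, Proposition \ref{generated}(a) applied to $E'$ yields $D_{E'} \ge n'\chi(\mathcal{O}_X)$. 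Substituting this into the previous bound gives
$$\alpha \le \frac{n'\bigl(c_1Hm+\chi(E)-n\chi(\mathcal{O}_X)\bigr)}{k(n-n')} = \frac{n'\,c}{k(n-n')},$$
where $c := c_1Hm+\chi(E)-n\chi(\mathcal{O}_X) \in \mathbb{Q}[m]_{>0}$ is positive by Remark \ref{cota c1}.

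Finally I would optimise over the admissible ranks. As a function of $n'$ the factor $\tfrac{n'}{n-n'}$ is strictly increasing, so on the range $1 \le n' \le k-1$ it is maximised at $n' = k-1$ (this is exactly the kind of monotonic comparison packaged in Lemma \ref{epsilon}(i)); hence whenever $n' \le k-1$ we obtain $\alpha \le \tfrac{(k-1)c}{k(n-k+1)} =: \alpha_I$. Consequently, if $\alpha > \alpha_I$ the inequality must fail, forcing $n' = k$ and thus $N=0$. It then remains to verify that $\alpha_I$ lies in the asserted interval: positivity is clear for $k\ge 2$ (for $k=1$ a single generating section of a torsion-free sheaf is automatically injective, so one simply takes $\alpha_I$ to be the right endpoint), and the inequality $\tfrac{(k-1)c}{k(n-k+1)} \le \tfrac{c}{n-k}$ reduces, after clearing the positive factor $c$ and cross-multiplying, to $-n \le 0$, which holds.

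I expect the only genuinely delicate point to be bookkeeping the direction of the inequalities: the subsystem $(E',V)$ retains the \emph{full} space $V$, so that its section-to-rank ratio $k/n'$ strictly exceeds $k/n$, and it is precisely this feature that makes $\alpha$-semistability bound $\alpha$ from \emph{above} rather than from below. Getting this direction right, together with the correct use of the generation bound from Proposition \ref{generated}(a) to eliminate the dependence on $D_{E'}$, is the crux of the argument; the remaining optimisation and the membership check for $\alpha_I$ are routine.
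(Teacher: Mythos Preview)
Your proof is correct and follows essentially the same approach as the paper: both pass through the generated subsystem $(Im(\varphi),V)$, combine the $\alpha$-semistability inequality with the generation bound of Proposition~\ref{generated}(a), and then optimise over the possible ranks of the image. Your direct computation and optimisation over $n'=\rank(Im(\varphi))$ is slightly cleaner than the paper's route via the auxiliary function $\epsilon(p,q)$ and Lemma~\ref{epsilon}, and it yields an explicit $\alpha_I=\tfrac{(k-1)c}{k(n-k+1)}$ that is visibly independent of the particular coherent system.
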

\begin{proof} Keeping the notation introduced in Lemma \ref{epsilon}, we define
\[\alpha_I:= \max_{0 \leq l <k} \{ \frac{1}{n-k}(d-\epsilon(1,(k-l)\chi(\mathcal{O}_X))\}\]
and assume that $\alpha > \alpha_I$. Let us denote by $N$ the kernel of $\varphi: V \otimes \mathcal{O}_X \to E$  and set $k_0:=rk(N)$. 
By the exact sequence $$0\rightarrow N\rightarrow \mathcal{O}_X^{\oplus k}\rightarrow Im(\varphi)\rightarrow0,$$ we get  $rk(Im(\varphi))=k-k_0$. 
Hence, $(Im(\varphi),V)$ is a generated coherent subsystem of $(E,V)$ of type $(k-k_0;c_1(Im(\varphi)),c_2(Im(\varphi)),k)$. Since $(E,V)$ is $\alpha-$semistable, we have
        \[P^\alpha_{H,(Im(\varphi),V)}(m) \leq P^\alpha_{H,(E,V)}(m), \]
        \noindent which is equivalent to 
\[\frac{k}{k-k_0}\alpha  +  \frac{c_1(Im(\varphi)) H}{k-k_0} m+ \frac{\chi(Im(\varphi))}{k-k_0} \leq 
        \frac{k}{n}\alpha  + 
       \frac{c_1 H}{n} m+ \frac{\chi(E)}{n} \]
       \noindent and to the fact that 
        \[ \alpha \leq \frac{[ c_1 H m+\chi(E)](k-k_0)-n[c_1(Im(\varphi)) H m+\chi(Im(\varphi))]}{k(n-k+k_0)}.\]  

Consider $d= c_1(E) H m+\chi(E)$ and $-d_0=c_1(Im(\varphi)) H m+\chi(Im(\varphi))$. According to this notation,  
\begin{equation}
\label{cota_alpha}
    \alpha\leq \frac{(k-k_0)d-n(-d_0)}{k(n-k+k_0)},
\end{equation}
which is equivalent to
    \[         \alpha \leq \frac{d-\epsilon(k_0,-d_0)}{n-k}.    \]

Since $(Im(\varphi),V)$ is generated,  by Proposition \ref{generated}, $-d_0 \geq (k-k_0)\chi(\mathcal{O}_X)$ and hence, by Lemma \ref{epsilon} (ii), it follows that
\[         \alpha \leq \frac{d-\epsilon(k_0,-d_0)}{n-k} \leq \frac{d-\epsilon(k_0,(k-k_0)\chi(\mathcal{O}_X))}{n-k}.    \]
\noindent {\bf Claim:}     $d > (k-k_0)\chi(\mathcal{O}_X)$.

\noindent Assume that  $d \leq  (k-k_0)\chi(\mathcal{O}_X)$.  By Remark \ref{cota c1}, $ 0 < c_1Hm+\chi(E)-n\chi(\mathcal{O}_X)$. Hence, we would have
\[    n \chi(\mathcal{O}_X) < d \leq  (k-k_0)\chi(\mathcal{O}_X)\]
which in particular implies that $\chi(\mathcal{O}_X) <0$ and hence $c_1Hm+\chi(E)=d <0$ which contradicts the assumption $c_1H>0$ (see Remark \ref{cota c1}). 

It follows from the claim and Lemma  \ref{epsilon} (i) that, for any $p \leq k_0$,
    \begin{equation}
    \label{cota_alpha2}
         \alpha \leq \frac{d-\epsilon(k_0,-d_0)}{n-k} < \frac{d-\epsilon(k_0,(k-k_0)\chi(\mathcal{O}_X))}{n-k} < \frac{d-\epsilon(p,(k-k_0)\chi(\mathcal{O}_X))}{n-k}. 
    \end{equation}
    If $k_0 \geq 1$, taking $p=1$, by (\ref{cota_alpha2}), 
    \begin{equation}
         \alpha < \frac{d-\epsilon(1,(k-k_0)\chi(\mathcal{O}_X))}{n-k} \leq \alpha_I 
    \end{equation}
    which contradicts the fact that $\alpha > \alpha_I$. Thus, $k_0=0$ and  $\varphi$ is injective. 
\end{proof}

To fix some notation, it is convenient to recall the following definition (see \cite[Definition 1.15]{HL}).

\begin{Definition} \rm 
     The saturation of a subsheaf $F \subset E$ is the minimal subsheaf $F'$ contai-\\ning $F$ such that $E/F'$ is pure of dimension $d = dim (E)$ or zero. The saturation of $F$ is the kernel of the surjection
     \[E \to E/F \to  (E/F)/T_{d-1}(E/F)\]
     where $T_i(E)$ is the maximal subsheaf of $E$ of dimension $\leq i$. We will denote the saturation of a sheaf $F$ by $\overline{F}$.
\end{Definition}

Let $f:E \to F$ be a morphism of torsion free sheaves and consider its  canonical factorization 
\[\begin{diagram}
    \node{0} \arrow{e}  \node{Ker(f)} \arrow{} \arrow{e} \node{E} \arrow{s} \arrow{e} \node{Im(f)} \arrow{s} \arrow{e} \arrow{e} \node{0} \\
    \node{0}   \node{F/Im(f)/T_{d-1}(F/Im(f))}  \arrow{w} \node{F} \arrow{w} \node{\overline{Im(f)}} \arrow{w} \arrow{w} \node{0} \arrow{w}
\end{diagram}\]
where $rank \, Im(f)= rank \, \overline{Im(f)}$.

\begin{Proposition}
    Let $(E,V)$ be a coherent system of type $(n;c_1,c_2,k)$ with $k<n$.  Suppose
that the map $\varphi: V \otimes \mathcal{O}_X \to E$ is injective. Then, there exists\[
\alpha_T \in \left( 0, \frac{1}{n-k} \left( c_1 H m + \chi(E) - n \chi(\mathcal{O}_X) \right) \right)
\]
 such that, if $\alpha > \alpha_T $ and $(E,V)$ is $\alpha-$semistable,  then the  cokernel $E/Im (\varphi)$ is torsion free.
\end{Proposition}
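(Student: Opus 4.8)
The plan is to argue by contradiction. Assuming the cokernel $F := E/\mathrm{Im}(\varphi)$ has nonzero torsion, I would produce a proper coherent subsystem whose reduced Hilbert polynomial forces an upper bound on $\alpha$ that lies strictly below the bound $c := \frac{1}{n-k}\bigl(c_1 H m + \chi(E) - n\chi(\mathcal{O}_X)\bigr)$, and then take $\alpha_T$ to be that bound.

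First I would build the destabilizing subsystem via saturation. Since $\varphi$ is injective, set $E' := \overline{\mathrm{Im}(\varphi)}$, the saturation of $\mathrm{Im}(\varphi)\cong V\otimes\mathcal{O}_X$ inside $E$. By definition of saturation $E/E'$ is torsion free, so the torsion of $F$ is exactly $T := E'/\mathrm{Im}(\varphi)$, and $F$ is torsion free if and only if $T=0$. Saturation preserves rank, so $\rank E' = k < n$ and $E'$ is a proper nonzero subsheaf; since $\mathrm{Im}(\varphi)\subset E'$ the sections of $V$ factor through $E'$, so $(E',V)$ is a proper coherent subsystem of rank $k$. From the sequence $0\to V\otimes\mathcal{O}_X\to E'\to T\to 0$ and $c_1(V\otimes\mathcal{O}_X)=0$ I read off $c_1(E')=c_1(T)$ and $\chi(E')=k\chi(\mathcal{O}_X)+\chi(T)$.

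Next I would impose $\alpha$-semistability on this subsystem. Because $\dim V = \rank E' = k$, one has $p^{\alpha}_{H,(E',V)} = \alpha + P_{H,E'}/k$, and the inequality $p^{\alpha}_{H,(E',V)}\le p^{\alpha}_{H,(E,V)}$ rearranges to $\frac{n-k}{n}\alpha \le \frac{P_{H,E}}{n}-\frac{P_{H,E'}}{k}$. Substituting the explicit Hilbert polynomials, the $m^2$ and $K_X H$ terms cancel and the right-hand side becomes $\bigl(\frac{c_1 H}{n}-\frac{c_1(T)H}{k}\bigr)m + \frac{\chi(E)}{n}-\chi(\mathcal{O}_X)-\frac{\chi(T)}{k}$. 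Multiplying by $n/(n-k)$ and regrouping, this is precisely $\alpha \le c - \frac{n}{k(n-k)}\bigl(c_1(T)H\,m+\chi(T)\bigr)$. The key observation is that by Riemann--Roch for a sheaf of dimension $\le 1$ one has $c_1(T)H\,m+\chi(T)=P_{H,T}(m)$, the Hilbert polynomial of the torsion sheaf $T$ itself; hence $\alpha \le c - \frac{n}{k(n-k)}\,P_{H,T}(m)$.

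Finally I would make the bound uniform in $T$. For any nonzero sheaf $T$ of dimension $\le 1$ the polynomial $P_{H,T}$ lies in $\mathbb{Q}[m]_{>0}$ and satisfies $P_{H,T}(m)\ge 1$ for $m\gg 0$: its leading coefficient $c_1(T)H$ is positive when $\dim T = 1$, while $P_{H,T}\equiv \mathrm{length}(T)\ge 1$ when $\dim T = 0$. Therefore $\alpha \le c - \frac{n}{k(n-k)}$, and setting $\alpha_T := c - \frac{n}{k(n-k)}$ yields a contradiction as soon as $\alpha > \alpha_T$; note $\alpha_T < c$ and, since $c_1 H>0$ forces a positive leading coefficient, $\alpha_T > 0$ for $m\gg 0$, matching the required range. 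The saturation construction and the Hilbert-polynomial computation are routine; the real content, and the step I expect to be the main obstacle, is this last one—extracting a single threshold $\alpha_T$ independent of the a priori unbounded torsion $T$, which rests on the uniform lower bound $P_{H,T}\ge 1$ valid for every nonzero torsion sheaf.
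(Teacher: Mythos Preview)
Your proof is correct and follows essentially the same approach as the paper: both saturate $\mathrm{Im}(\varphi)$ to obtain the subsystem $(\overline{\mathrm{Im}(\varphi)},V)$, apply $\alpha$-semistability to derive $\alpha \le c - \tfrac{n}{k(n-k)}P_{H,T}(m)$, and then bound the torsion contribution from below to obtain the same threshold $\alpha_T = c - \tfrac{n}{k(n-k)}$. Your packaging via the uniform inequality $P_{H,T}(m)\ge 1$ for any nonzero sheaf is slightly cleaner than the paper's separate treatment of the cases $\dim T=0$ and $\dim T=1$, but the argument and the resulting $\alpha_T$ are identical.
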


\begin{proof}
    Consider the subsystem $(\overline{Im(\varphi)}, V)$.  
    Since $\varphi: V \otimes \mathcal{O}_X \to E$ is injective we have the exact sequence 
    \begin{equation}\label{saturation}
        0 \to V \otimes \mathcal{O}_X \to \overline{Im(\varphi)} \to T \to 0 
    \end{equation}
    where $T=T_{1}(E/Im(\varphi))$ is the torsion part of $E/Im (\varphi)$, i.e it is the maximal subsheaf of $E/Im(\varphi)$ of dimension less than or equal to 1.  

    Since $(E,V)$ is $\alpha-$semistable we have
    \[
        P^\alpha_{H,(\overline{Im(\varphi)}, V)}(m) \leq P^\alpha_{H,(E, V)}(m) \]
    which is equivalent to 
    \[
        \alpha \leq \frac{c_1Hm+\chi(E)-\frac{n}{k}(c_1(\overline{Im(\varphi)})Hm+\chi(\overline{Im(\varphi)})}{n-k} .
\]
Denote the Chern classes of $\overline{Im(\varphi)}$ by $d_i=c_i(\overline{Im(\varphi)})$ for $i=1,2$. Since 
\[ \chi(\overline{Im(\varphi)})= \frac{d_1^2-d_1K_X}{2}-d_2+k\chi(\mathcal{O}_X) \]
the above inequality is equivalent to
 \[        \alpha \leq \frac{1}{n-k} \left ( c_1Hm+\chi(E)-n\chi(\mathcal{O}_X)-\frac{n}{k}(d_1Hm+\frac{d_1^2-d_1K_X}{2}-d_2 )\right ).\]
 If $T$ is supported in dimension $0$, then $d_1=c_1(T)=0$ and  $d_2=c_2(T)\leq0$.  Hence, if
 $\alpha > \frac{1}{n-k}(c_1Hm+\chi(E)-n\chi(\mathcal{O}_X)-\frac{n}{k})$, then 
    \[\frac{1}{n-k}(c_1Hm+\chi(E)-n\chi(\mathcal{O}_X)-\frac{n}{k}) <
        \alpha \leq \frac{1}{n-k}(c_1Hm+\chi(E)-n\chi(\mathcal{O}_X)+\frac{n}{k}d_2), 
\]
which implies that  $-1<d_2 \leq 0$. Therefore, $d_2=0$ and  $E/Im (\varphi)$ is torsion free.

If $T$ is supported in dimension $1$, then  $d_1=c_1(T)$ is an effective divisor and in particular $d_1H \geq 0$. If  $\alpha > \frac{1}{n-k}(c_1Hm+\chi(E)-n\chi(\mathcal{O}_X)-\frac{n}{k})$ then, 
    {\tiny \[\frac{1}{n-k}\big(c_1Hm+\chi(E)-n\chi(\mathcal{O}_X)-\frac{n}{k}\big) <
        \alpha \leq \frac{1}{n-k}\big(c_1Hm+\chi(E)-n\chi(\mathcal{O}_X)-\frac{n}{k}(d_1Hm+\frac{d_1^2-d_1K_X}{2}-d_2 )\big).\] }
        In particular, the polynomial  $$-d_1Hm-\frac{d_1^2-d_1K_X}{2} +1+d_2 >0$$
        which implies that $d_1H \leq 0$. Therefore, since $d_1$ is effective and $H$ ample, $d_1=0$ and  $E/Im (\varphi)$ is torsion free.
\end{proof}

\begin{Proposition}\label{Fsemistable}
    Let $(E,V)$ be an $\alpha$-semistable coherent system of type $(n;c_1,c_2,k)$ with $k<n$ and $\alpha=\frac{1}{n-k}\big(c_1Hm+\chi(E)-n\chi(\mathcal{O}_X)\big)-\frac{1}{n-k}\epsilon$ with $\epsilon>0$ sufficiently small. Then, $(E,V)$ defines an extension $$0\rightarrow \mathcal{O}_X^{\oplus k}\rightarrow E\rightarrow F\rightarrow 0$$
with $F$ an $H$-semistable (Mumford-Takemoto) torsion free sheaf.
\end{Proposition}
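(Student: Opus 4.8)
The plan is to produce the extension from the two preceding propositions and then to read off the Mumford--Takemoto semistability of $F$ directly from the $\alpha$-semistability inequality. Set $c:=\frac{1}{n-k}\bigl(c_1Hm+\chi(E)-n\chi(\mathcal{O}_X)\bigr)$, so that the hypothesis reads $\alpha=c-\frac{1}{n-k}\epsilon$. First I would fix $\epsilon>0$ small enough that $\alpha$ exceeds both thresholds $\alpha_I$ and $\alpha_T$ supplied by the two previous propositions; this is possible because both thresholds lie strictly below $c$ and $\alpha\to c$ as $\epsilon\to 0$. For such $\alpha$ those propositions give that $\varphi\colon V\otimes\mathcal{O}_X\to E$ is injective and that its cokernel is torsion free. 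Since $Im(\varphi)\cong\mathcal{O}_X^{\oplus k}$, setting $F:=E/Im(\varphi)$ yields the desired extension $0\to\mathcal{O}_X^{\oplus k}\to E\to F\to 0$ with $F$ torsion free, $\rank F=n-k$, $c_1(F)=c_1$ and $\chi(F)=\chi(E)-k\chi(\mathcal{O}_X)$. In particular $\mu_H(F)=\frac{c_1H}{n-k}$, which is exactly the leading coefficient of the chosen $\alpha$; this coincidence is the heart of the argument.

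To prove that $F$ is $H$-semistable I would argue by contradiction. Suppose $0\neq F'\subsetneq F$ is a subsheaf with $\mu_H(F')>\mu_H(F)$ and $\rank F'=r'$, $0<r'<n-k$. Let $E':=q^{-1}(F')\subset E$ be its preimage under the quotient $q\colon E\to F$; it sits in $0\to\mathcal{O}_X^{\oplus k}\to E'\to F'\to 0$, so $Im(\varphi)\subseteq E'$, whence $V\subseteq H^0(E')$ and $(E',V)$ is a proper coherent subsystem of $(E,V)$ of type $(k+r';c_1(F'),c_2(E'),k)$. The $\alpha$-semistability of $(E,V)$ then forces $p^{\alpha}_{H,(E',V)}(m)\le p^{\alpha}_{H,(E,V)}(m)$.

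Now I would simply expand both reduced Hilbert polynomials. The $\tfrac{H^2}{2}m^2$ terms and the $-\tfrac{K_XH}{2}m$ terms cancel, and using $c_1(E')=c_1(F')$, $\chi(E')=k\chi(\mathcal{O}_X)+\chi(F')$, together with the fact that the leading coefficient of $\alpha$ is $a=\frac{c_1H}{n-k}$, a short computation shows that the coefficient of $m$ in $p^{\alpha}_{H,(E',V)}-p^{\alpha}_{H,(E,V)}$ equals $\frac{r'}{k+r'}\bigl(\mu_H(F')-\mu_H(F)\bigr)$. Since $\frac{r'}{k+r'}>0$, a strict destabilizer $\mu_H(F')>\mu_H(F)$ forces this coefficient to be positive, hence $p^{\alpha}_{H,(E',V)}>p^{\alpha}_{H,(E,V)}$, contradicting $\alpha$-semistability. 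Therefore no destabilizing subsheaf exists and $F$ is $H$-semistable.

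\textbf{Main obstacle.} The genuinely new content is this last computation: everything hinges on the leading coefficient of $\alpha$ being precisely $\mu_H(F)=\frac{c_1H}{n-k}$, which is exactly what makes the top-order part of the $\alpha$-semistability inequality collapse onto the slope inequality for $F$. Note that the constant term of $\alpha$ (hence $\epsilon$) never enters this comparison; it is relevant only in the boundary case $\mu_H(F')=\mu_H(F)$, where semistability already permits equality, so the smallness of $\epsilon$ is needed only to guarantee the existence of the extension. The one routine point to verify carefully is that the thresholds $\alpha_I$ and $\alpha_T$ are strictly below $c$, so that $\epsilon$ can indeed be chosen as required.
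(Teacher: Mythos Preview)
Your proof is correct and follows essentially the same route as the paper's: obtain the extension from the two preceding propositions, pull back an arbitrary subsheaf $F'\subset F$ to the subsystem $(E',V)$ containing $\mathcal{O}_X^{\oplus k}$, and read off the slope inequality $\mu_H(F')\le\mu_H(F)$ from the degree-one coefficient of the $\alpha$-semistability inequality, using that the leading coefficient of $\alpha$ equals $\mu_H(F)=\frac{c_1H}{n-k}$. Your packaging of that coefficient as $\frac{r'}{k+r'}\bigl(\mu_H(F')-\mu_H(F)\bigr)$ is in fact a slightly cleaner formulation of the same computation the paper carries out.
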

\begin{proof}
    By the above results and the fact that $\alpha$ is big enough, we can assume that $(E,V)$ defines the extension 
    \begin{equation}
        0\rightarrow \mathcal{O}_X^{\oplus k}\rightarrow E\rightarrow F\rightarrow 0
        \label{suc1}
    \end{equation}
     with $F$ torsion free. It only remains to see that $F$ is $H$-semistable. We can assume that $k<n-1$. Otherwise, $F$ has rank $1$ and there is nothing to prove. 

     Let $F'\subset F$ be any subbundle with quotient torsion free. Denote the rank and degree of $F'$ by $n'-k$ and $d'$, respectively. Note that $k<n'<n$.

     By the pullback construction, we obtain a subextension 
     \begin{equation}
         0\rightarrow \mathcal{O}_X^{\oplus k}\rightarrow E'\rightarrow F'\rightarrow 0,
         \label{suc2}
     \end{equation}
     where $E'$ is a subbundle of $E$. Notice that $V\cap H^0(E')=V'$, so that $F'$ determines a  coherent subsystem $(E',V)$ of type $(n';c_1',c_2',k)$. In addition, observe that from the exact sequences (\ref{suc1}) and (\ref{suc2}) we have the following equalities:
$$
    \chi(E)= \chi(F)+k\chi(\mathcal{O}_X), \quad  \chi(E')= \chi(F')+k\chi(\mathcal{O}_X), \quad 
    $$
and for $i=1,2$
$$ c_i(E)=c_i(F) \quad \mbox{and} \quad  c_i(E')=c_i(F'). $$

Since $(E',V)$ is a subcoherent system of $(E,V)$ and $(E,V)$ is $\alpha$-semistable, we have $$P^{\alpha}_{(E',V)}(m)\leq P^{\alpha}_{(E,V)}(m),$$
which is equivalent to $$c_1'Hm+\chi(F')\leq \frac{n'}{n}(c_1Hm+\chi(F))+(n'-n)\frac{k}{n}(\alpha+ \chi(\mathcal{O}_X)).$$
In particular,
$$c_1'H\leq \frac{n'}{n}c_1H+(n'-n)\frac{k}{n}\frac{1}{n-k}c_1H,$$
which is equivalent to $$\frac{c_1'H}{n'-k}\leq \frac{c_1H}{n-k}$$ and hence $F$ is $H$-semistable.
\end{proof}

\vspace{3mm}

We have already seen that for $\alpha$ big enough, an $\alpha-$semistable coherent system $(E,V)$ defines an extension $$0\rightarrow \mathcal{O}_X^{\oplus k}\rightarrow E\rightarrow F\rightarrow 0$$
with $F$ an $H$-semistable (Mumford-Takemoto) torsion free sheaf. Our next goal is to reverse the construction. To do so, we start with the following technical result.


\begin{Lemma}\label{rankdim}
    Let $W$ be a coherent sheaf such that $W \subset \mathcal{O}_X^{\oplus k}$. Then, $c_1(W)\cdot H \leq 0$ and $h^0(W) \leq rk(W)$. Moreover, if $c_1(W)\cdot H=0$ and 
    $k':=h^0(W)=rk(W)$, then $W \cong  \mathcal{O}_X^{\oplus k'}$ or $\chi(W)=k'\chi(\mathcal{O}_X)-c_2(W)$ with $c_2(W) \geq 0$ 
\end{Lemma}
\begin{proof}    The first inequality follows from the fact that the vector bundle $ \mathcal{O}_X^{\oplus k}$ is $H$-semistable with first Chern class equal to 0. On the other hand, since $W \subset \mathcal{O}_X^{\oplus k}$, we have $H^0(W) \subset H^0(\mathcal{O}_X^{\oplus k})=\mathbb{C}^k$ and any section of $W$ is nowhere vanishing. Thus, sections of $W$ generate a submodule $\mathcal{O}_X^{k'}$of $W$ of rank $k'$ equal to $h^0(W)$. Therefore, $h^0(W) \leq rk(W)$. 

Assume that $c_1(W)\cdot H=0$ and  $k':=h^0(W)=rk(W)$. Then, we must have the exact sequence 
\[ 0 \longrightarrow \mathcal{O}_X^{k'} \longrightarrow W \longrightarrow T \longrightarrow 0 \]
with $T$ a torsion sheaf. If $\dim Supp(T)=1$, then $c_1(T)=c_1(W)$ is an effective divisor and since $H$ is an ample divisor we have $c_1(T)H>0$ which contradicts the fact that $c_1(W)H=0$. If $\dim Supp(T)=0$, then $c_1(T)=0$ and $c_2(T) \geq 0$. If $c_2(T)=0$, then  $W \cong  \mathcal{O}_X^{\oplus k'}$. Finally, if $c_2(T)=c_2(W)>0$,
\[\chi(W)=\chi(\mathcal{O}_X^{k'})+\chi(T)=k'\chi(\mathcal{O}_X)-c_2(W)\]
with $c_2(W)>0$. 
\end{proof}


\begin{Proposition}
\label{E-stable} 
Let $e=(e_1, \ldots, e_k) \in H^1(F^*)^k$ be the extension class of the exact sequence $$e: 0 \to \mathcal{O}_X^{\oplus k}  \to E \to F \to 0,$$ where the $e_i$ are linearly independent in $H^1(F^*)$. Assume that $F$ is an $H$-stable torsion free sheaf of rank $n-k>0$. Then, $e$ defines an $\alpha$-stable coherent system $(E,V)$ for any $\alpha=\frac{1}{n-k}(c_1Hm+\chi(E)-n\chi(\mathcal{O}_X))-\frac{1}{n-k}\epsilon$ with $\epsilon>0$ sufficiently small (in particular$\epsilon k <n$). 
\end{Proposition}
\begin{proof}
    Let $(E,V)$ be the corresponding coherent system of type $(n;c_1,c_2,k)$ defined by $e=(e_1, \ldots, e_k) \in H^1(F^*)^k$ and let $(E',V')$ be a subobject of $(E,V)$  of type $(n';c_1',c'_2,k')$ with $V'=V\cap H^0(X,E')$. By the pull-back construction,  $E'$ determines an extension 
    \[0 \to W \to E' \to F' \to 0\] with $F' \subset F$ and $W \subset \mathcal{O}_X^{\oplus k}$ which fits into the following diagram
    \begin{equation}\label{diagram}
        \begin{diagram}
        \node{0} \arrow{e}  \node{W} \arrow{e} \arrow{s} \node{E'} \arrow{e} \arrow{s} \node{F'} \arrow{e} \arrow{s} \node{0}\\
        \node{0} \arrow{e}  \node{\mathcal{O}_X^{\oplus k}} \arrow{e}  \node{E} \arrow{e}  \node{F} \arrow{e}  \node{0}
    \end{diagram}.
    \end{equation}
    Taking cohomology in (\ref{diagram}) and keeping in mind that $V'=V\cap H^0(E')$ and $$H^0(W) \hookrightarrow V \cong H^0(\mathcal{O}_X^{\oplus k})$$ we get  $k'=\dim V'=h^0(W)$. On the other hand, since $F$ is $H$-stable,  there exists $\delta >0$ such that
    \begin{equation}\label{Fstable}
        \mu_H(F') \leq \mu_H(F)-\frac{\delta}{n-k}
    \end{equation}
    for all $0\neq F'\subset F$.  Note that the constant $\delta$ can be chosen independently of $F'$.  In fact, consider the set $\{\mu_H(F) | F \subset E \}$. This set has a maximal element $\mu_{\max}$ and the set $\{F | F \subset E,\mu_H(F) = \mu_{\max} \}$  contains a torsion free sheaf $F'$ of largest rank. This element is the largest element in this set with respect to the inclusion relation. Hence, one can set $0<\frac{\delta}{n-k}=\mu_H(F)-\mu_H(F')$. 

    Let us fix $\alpha=\frac{1}{n-k}(c_1Hm+\chi(E)-n\chi(\mathcal{O}_X))-\frac{\epsilon}{n-k}$ where $\epsilon \in \mathbb{Q}_{>0}$. We are going to see that $P^\alpha_{H,(E',V')}(m) < P^\alpha_{H,(E,V)}(m)$. To this end, we will consider different cases separately. 
    
    \noindent \textbf{Case (A): $0 \neq F' \subset F$ and $W = \mathcal{O}_X^{\oplus k'}$.}
    
    From diagram (\ref{diagram}) we have  $c'_1:=c_1(E')=c_1(F')$ and $rk(F')=n'-k'$. Thus, from  (\ref{Fstable}) we get
 \[ {\tiny \begin{array}{rl}
        P^\alpha_{H,(E',V')}(m) - P^\alpha_{H,(E,V)}(m)  =  &\left[\frac{c'_1}{n'}-\frac{c_1}{n}\right]Hm+\left[\frac{\chi(E')}{n'}-\frac{\chi(E)}{n}\right]+\left[\frac{1}{n-k}(c_1Hm+\chi(E)-n\chi(\mathcal{O}_X))-\frac{\epsilon}{n-k}\right]\left[\frac{k'}{n'}-\frac{k}{n}\right] \\ \leq 
        &\left[\frac{c_1}{n-k}\frac{n'-k'}{n'}-\frac{c_1}{n}\right]Hm+\left[\frac{\chi(E')}{n'}-\frac{\chi(E)}{n}\right]-\frac{\delta}{n-k}\frac{n'-k'}{n'}m+ \\ &
         \left[\frac{1}{n-k}(c_1Hm+\chi(E)-n\chi(\mathcal{O}_X))-\frac{\epsilon}{n-k}\right]\left[\frac{k'}{n'}-\frac{k}{n}\right]\\ =
        &\left[-\frac{\delta}{n-k}\frac{n'-k'}{n'}\right]m+\left[\frac{\chi(E')}{n'}-\frac{\chi(E)}{n}\right]+
        \left[\frac{1}{n-k}(\chi(E)-n\chi(\mathcal{O}_X))-\frac{\epsilon}{n-k}\right]\left[\frac{k'}{n'}-\frac{k}{n}\right].
    \end{array}}\]

    Since $\delta >0$ and $n'-k'>0$, we conclude that $P^\alpha_{H,(E',V')}(m) - P^\alpha_{H,(E,V)}(m)<0$.

    \noindent \textbf{Case (B): $0 \neq F' \subset F$ and $W \neq \mathcal{O}_X^{\oplus k'}$.} Let  $l:=rk(W)$. By Lemma \ref{rankdim}, $k' \leq l$ and $c_1(W)H \leq 0$. Hence, 
    \[\mu_H(E')=\frac{(c_1(W)+c_1(F'))H}{n'}\leq \frac{c_1(F')\cdot H}{n'}\] and since $F$ is stable from (\ref{Fstable}) we get
    \[\mu_H(E')\leq  \left(\mu_H(F)-\frac{\delta}{n-k}\right)\frac{n'-l}{n'}.\]
    Therefore, we have
    \[ {\tiny \begin{array}{rl}
        P^\alpha_{H,(E',V')}(m) - P^\alpha_{H,(E,V)}(m)  =  &\left[\frac{c'_1}{n'}-\frac{c_1}{n}\right]Hm+\left[\frac{\chi(E')}{n'}-\frac{\chi(E)}{n}\right]+\left[\frac{1}{n-k}(c_1Hm+\chi(E)-n\chi(\mathcal{O}_X))-\frac{\epsilon}{n-k}\right]\left[\frac{k'}{n'}-\frac{k}{n}\right] \\ \leq 
        &\left[\left(\mu_H(F)-\frac{\delta}{n-k}\right)\frac{n'-l}{n'}-\frac{c_1H}{n}\right]m+\left[\frac{\chi(E')}{n'}-\frac{\chi(E)}{n}\right]+\\
        &\left[\frac{1}{n-k}(c_1Hm+\chi(E)-n\chi(\mathcal{O}_X))-\frac{\epsilon}{n-k}\right]\left[\frac{k'}{n'}-\frac{k}{n}\right] \\ = 
        &\left[-\frac{\delta}{n-k}\frac{n'-l}{n'}+\frac{c_1H}{n'}\frac{k'-l}{n-k}\right]m+\left[\frac{\chi(E')}{n'}-\frac{\chi(E)}{n}\right]+\\&
        \left[\frac{1}{n-k}(\chi(E)-n\chi(\mathcal{O}_X))-\frac{\epsilon}{n-k}\right]
        \left[\frac{k'}{n'}-\frac{k}{n}\right].
    \end{array}}\]
    Since $\delta>0$ and  $k'-l \leq 0$, we conclude that $P^\alpha_{H,(E',V')}(m) - P^\alpha_{H,(E,V)}(m)<0$.

   \noindent \textbf{Case (C): $F'=0$.} In this case $E' \cong W$ and 
    \[ {\tiny \begin{array}{rl}
        P^\alpha_{H,(E',V')}(m) - P^\alpha_{H,(E,V)}(m)=(\frac{c_1'H}{n'}-
        \frac{c_1H}{n})m+\frac{\chi(E')}{n'}-\frac{\chi(E)}{n}+\left[\frac{1}{n-k}(c_1Hm+\chi(E)-n\chi(\mathcal{O}_X))-\frac{\epsilon}{n-k}\right]\left[\frac{k'}{n'}-\frac{k}{n}\right].\\
    \end{array}}\]
   Notice that the coefficient of $m$ in the above polynomial is strictly negative if and only if
   \[ (n-k)nc_1'H+n(k'-n')c_1H <0.\]
By Lemma \ref{rankdim}, $c_1'H \leq 0$ and $k' \leq n' $. If one of these inequalities is strict, then    $P^\alpha_{H,(E',V')}(m) - P^\alpha_{H,(E,V)}(m)<0$. Assume that $c_1'H=0$ and $k'= n' $.
In this case,
\begin{equation} \label{casC} P^\alpha_{H,(E',V')}(m) - P^\alpha_{H,(E,V)}(m)=\frac{\chi(E')}{n'}-\chi(\mathcal{O}_X)-\frac{\epsilon}{n-k}.\end{equation}
By Lemma \ref{rankdim}, $W \cong  \mathcal{O}_X^{\oplus k'}$ or $\chi(W)=k'\chi(\mathcal{O}_X)-c_2(W)$ with $c_2(W) > 0$. In the first case,  $(\ref{casC})$ is equivalent to
\[P^\alpha_{H,(E',V')}(m) - P^\alpha_{H,(E,V)}(m)=-\frac{\epsilon}{n-k} <0,\]
otherwise, 
\[P^\alpha_{H,(E',V')}(m) - P^\alpha_{H,(E,V)}(m)=-c_2(W)-\frac{\epsilon}{n-k} <0,\]
since $c_2(W) >0$. Hence, in both cases $P^\alpha_{H,(E',V')}(m) - P^\alpha_{H,(E,V)}(m)<0$. 
   
     \noindent \textbf{Case (D): $F'=F$ and $W \neq \mathcal{O}_X^{\oplus k'}$.} Denote by  $l=rk(W)$. In this case, 
      \[ {\tiny \begin{array}{rl}
        P^\alpha_{H,(E',V')}(m) - P^\alpha_{H,(E,V)}(m)  = & 
        \left[\frac{c_1(W)}{n-k+l}+\frac{c_1(F)}{n-k+l}-\frac{c_1}{n}\right]Hm+\frac{\chi(E')}{n-k+l}-\frac{\chi(E)}{n}+\\ & \left[\frac{1}{n-k}(c_1Hm+\chi(E)-n\chi(\mathcal{O}_X))-\frac{\epsilon}{n-k}\right]\left[\frac{k'}{n-k+l}-\frac{k}{n}\right]\\ 
        = & 
        \left[\frac{c_1(W)}{n-k+l}-c_1\left(\frac{1}{n-k+k}-\frac{1}{n}+\frac{1}{n-k}\left(\frac{k'}{n-k+l}-\frac{k}{n}\right)\right)\right]Hm+\frac{\chi(E')}{n-k+l}-\frac{\chi(E)}{n}+\\
        &\left[\frac{1}{n-k}(\chi(E)-n\chi(\mathcal{O}_X))-\frac{\epsilon}{n-k}\right]\left[\frac{k'}{n-k+l}-\frac{k}{n}\right]\\ = 
          & \frac{1}{n-k+l}\left[c_1(W)+c_1\frac{k'-l}{(n-k)}\right]Hm+\frac{\chi(E')}{n-k+l}-\frac{\chi(E)}{n}+ \\ &\left[\frac{1}{n-k}(\chi(E)-n\chi(\mathcal{O}_X))-\frac{\epsilon}{n-k}\right]\left[\frac{k'}{n-k+l}-\frac{k}{n}\right]
    \end{array}}\]

By Lemma \ref{rankdim}, $k'\leq l$ and $c_1(W)\cdot H \leq 0$. Thus, if one of these inequalities is strict, the coefficient of the above polynomial is strictly negative and hence   \[P^\alpha_{H,(E',V')}(m) - P^\alpha_{H,(E,V)}(m)<0.\] Assume that $k'=l$ and $c_1(W)\cdot H =0$. In this case, by Lemma   \ref{rankdim}, since $W \neq \mathcal{O}_X^{\oplus k'}$, $\chi(W)=k'\chi(\mathcal{O}_X)-c_2(W)$ with $c_2(W) > 0$. Hence, since $\chi(E')=\chi(E)-k\chi(\mathcal{O}_X)+\chi(W)$,
\[ P^\alpha_{H,(E',V')}(m) - P^\alpha_{H,(E,V)}(m)  = -\frac{c_2(W)}{n-k+l}-\frac{\epsilon(l-k)}{n(n-k+l)}.\]
Since $c_2 \geq 1$, this difference is negative for any $\epsilon$ such that $\epsilon(k-l)<n$. Therefore, $P^\alpha_{H,(E',V')}(m) - P^\alpha_{H,(E,V)}(m)<0$. 

    \noindent \textbf{Case (E): $F'=F$ and $W = \mathcal{O}_X^{\oplus k'}$.}  In this case, the quotient $E/E' \cong \mathcal{O}_X^{\oplus k}/\mathcal{O}_X^{\oplus k'}$ is of rank $k-k'$ and generated by $k-k'$ global sections. Thus $\mathcal{O}_X^{\oplus (k-k')}$ is a subbundle of $E$, and hence $E=E'\oplus \mathcal{O}_X^{\oplus (k-k')}$ which is impossible since $\{e_1, e_2, \ldots, e_k\}$ are linearly independent.
    
    Putting altogether we get that $(E,V)$ is $\alpha$-stable.  
\end{proof}

The following result determines an upper bound for the number of independent sections of $E$ in terms of their invariants and of the variety. This result is an analogue of Clifford’s Theorem for $\alpha-$semistable coherent systems on $X$ (Compare the first item with  \cite[Theorem 3.4]{CMR}).

\begin{Corollary}
    Let $(E,V)$ be an $\alpha$-semistable coherent system of type $(n;c_1,c_2,k)$ with $\alpha=\frac{1}{n-k}(c_1Hm+\chi(E)-n\chi(\mathcal{O}_X))-\frac{1}{n-k}\epsilon$ and $\epsilon>0$ sufficiently small.
    \begin{itemize}
        \item [(i)] Let H be a very ample divisor on $X$ such that $K_X\cdot H \leq 0$. Let $a$ be an integer such that
 \[H^2 \cdot\max\{\frac{n^2-1}{4},1\} < \frac{\binom{a+2}{2}-a-1}{a}\] and assume that $0\leq \frac{c_1H}{n}<aH^2+K_XH$. Then, \[h^0(E)\leq n+a\frac{c_1H}{2}. \]
 \item [(ii)] Let $X=\mathbb{P}^2$. Then, $h^0(E)\leq \frac{c_1^2}{2(n-k)}+\frac{3}{2}c_1+n.$
    \end{itemize}
\end{Corollary}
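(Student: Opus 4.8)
The plan is to reduce both statements to a bound on the number of global sections of an $H$-semistable torsion free sheaf, and then to restrict to a curve and invoke a Clifford-type inequality. For the chosen $\alpha$, Proposition \ref{Fsemistable} shows that $(E,V)$ sits in an extension \[0\to \mathcal{O}_X^{\oplus k}\to E\to F\to 0\] with $F$ an $H$-semistable torsion free sheaf of rank $n-k$ and $c_1(F)=c_1$. Passing to cohomology and using $h^0(\mathcal{O}_X^{\oplus k})=k$ gives $h^0(E)\le k+h^0(F)$, so it suffices to bound $h^0(F)$: for $(i)$ I must prove $h^0(F)\le (n-k)+a\,\tfrac{c_1H}{2}$, and for $(ii)$ that $h^0(F)\le \tfrac{c_1^2}{2(n-k)}+\tfrac32 c_1+(n-k)$.

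For $(i)$ I would fix a general smooth curve $C\in|aH|$. As $F$ is torsion free it is locally free off a finite set, so by Bertini a general $C$ avoids that set and $F|_C$ is a vector bundle of rank $n-k$ and degree $c_1(F)\cdot C=a\,c_1H$. Observe that the displayed inequality on $a$ is exactly Flenner's restriction criterion, once one computes $\tfrac{\binom{a+2}{2}-a-1}{a}=\tfrac{a+1}{2}$; hence for general $C$ the restriction $F|_C$ is again semistable. The exact sequence \[0\to F(-aH)\to F\to F|_C\to 0\] yields $h^0(F)\le h^0(F(-aH))+h^0(F|_C)$. Twisting preserves semistability, and the bound on $c_1H$ guarantees $\mu_H(F)<aH^2$, so that $F(-aH)$ is semistable of negative slope; a nonzero section would then embed $\mathcal{O}_X$ as a subsheaf of slope $0$, contradicting semistability, whence $h^0(F(-aH))=0$. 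Finally $2g_C-2=(K_X+aH)\cdot aH$, and the hypothesis $0\le\tfrac{c_1H}{n}<aH^2+K_XH$ is precisely what places $\mu_H(F|_C)=\tfrac{a\,c_1H}{n-k}$ inside the Clifford interval $[0,2g_C-2]$; the generalized Clifford theorem for semistable bundles on curves then gives $h^0(F|_C)\le \tfrac12\deg(F|_C)+(n-k)=a\tfrac{c_1H}{2}+(n-k)$. Adding the two contributions and the initial $k$ produces $h^0(E)\le n+a\tfrac{c_1H}{2}$.

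For $(ii)$ I would specialize this scheme to $X=\mathbb{P}^2$, where $H^2=1$, $K_X\cdot H=-3$ and $\deg(F|_C)=a\,c_1$, and choose the degree of the auxiliary curve as small as the Clifford range allows, namely $a=\tfrac{c_1}{n-k}+3$, so that $\mu_H(F|_C)=2g_C-2$ and Clifford is applied at the top of its range. With this choice $h^0(F|_C)\le \tfrac12 a\,c_1+(n-k)=\tfrac{c_1^2}{2(n-k)}+\tfrac32 c_1+(n-k)$, and adding $k$ gives the asserted bound; on $\mathbb{P}^2$ one may alternatively restrict to a general line and use the Grauert--M\"ulich theorem to reach the same estimate by a direct, elementary computation.

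The main obstacle, and the reason the technical inequality on $a$ appears in the statement, is ensuring that the restricted sheaf $F|_C$ remains semistable: this is exactly Flenner's restriction theorem, whose hypothesis is the displayed bound. The second delicate point is the slope bookkeeping that keeps $\mu_H(F|_C)$ in the interval $[0,2g_C-2]$, where the generalized Clifford inequality is valid; outside this range one only obtains the weaker Riemann--Roch estimate $h^0=\deg-(n-k)(g_C-1)$, and this is what the condition $\tfrac{c_1H}{n}<aH^2+K_XH$ is designed to prevent. The remaining steps are routine Riemann--Roch and slope computations.
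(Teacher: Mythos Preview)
Your reduction via Proposition~\ref{Fsemistable} to the extension $0\to\mathcal{O}_X^{\oplus k}\to E\to F\to 0$ and the resulting inequality $h^0(E)\le k+h^0(F)$ matches the paper exactly. From that point on, however, the paper does not argue further: it simply invokes \cite[Theorem~4.1]{CM} for~(i) and \cite[Theorem~1.1]{CHN} for~(ii), each of which bounds $h^0$ of an $H$-semistable torsion free sheaf directly. You instead try to reconstruct those bounds.

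For (i), your scheme---Flenner restriction to a general $C\in|aH|$, vanishing of $H^0(F(-aH))$ by negative slope, and the generalized Clifford inequality for semistable bundles on $C$---is precisely the method behind the theorem cited from \cite{CM}, so in substance your argument and the paper's coincide. One bookkeeping caution: the rank of $F$ is $n-k$, not $n$, so both Flenner's hypothesis and the Clifford range should really be written with $n-k$. The displayed hypotheses in the corollary (which use $n$) are \emph{stronger} on the Flenner side, hence harmless there, but \emph{weaker} on the slope side: from $\tfrac{c_1H}{n}<aH^2+K_XH$ one cannot conclude $\tfrac{c_1H}{n-k}\le aH^2+K_XH$, which is what you actually need for $\mu(F|_C)\le 2g_C-2$. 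Your assertion that the hypothesis ``is precisely'' the Clifford range is therefore not correct as written; this looks like an imprecision in the statement of the corollary rather than a flaw in your strategy.

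For (ii), your curve-restriction argument does not close. Your choice $a=\tfrac{c_1}{n-k}+3$ is generally not an integer, and even after rounding up you give no reason Flenner's inequality holds for this particular $a$: part~(ii) imposes no relation between $c_1$ and $n-k$, so for $n-k$ large relative to $c_1$ the required $a$ can be far too small. The Grauert--M\"ulich alternative you mention is plausible on $\mathbb{P}^2$ but requires an iterated restriction-to-a-line argument that you do not carry out. The paper sidesteps all of this by citing \cite[Theorem~1.1]{CHN}, whose proof uses $\mathbb{P}^2$-specific techniques rather than restriction to a high-degree curve.
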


\begin{proof}
    By Proposition \ref{Fsemistable}, the coherent system $(E,V)$ fits into an extension $$0\rightarrow \mathcal{O}_X^{\oplus k}\rightarrow E\rightarrow F\rightarrow 0,$$
where $F$ is an $H$-semistable (Mumford-Takemoto) torsion free sheaf. From this exact sequence, 
\[h^0(E) \leq k + h^0(F).\]
The result $(i)$ follows that \cite[Theorem 4.1]{CM} and $(ii)$ follows by \cite[Theorem 1.1]{CHN}. 
\end{proof}

In Proposition \ref{E-stable} we have seen that any extension 
\begin{equation}
         0 \to \mathcal{O}_X^{\oplus k}  \to E \to F \to 0
     \end{equation}
     with $F$ an $H$-stable quotient defines an $\alpha$-stable coherent system for $\alpha$ big enough. Now the natural question to consider is what happens if $F$ is strictly $H$-semistable. Let us analyze this in the following result. 

\begin{Proposition}
     Any extension \begin{equation}\label{ext}
         0 \to \mathcal{O}_X^{\oplus k}  \to E \to F \to 0
     \end{equation}  in which $F$ is a rank $n-k>0$ strictly $H$-semitable bundle fails to define a coherent system $\alpha$-stable for any $$\alpha=\frac{1}{n-k}(c_1Hm+\chi(E)-n\chi(\mathcal{O}_X))-\frac{1}{n-k}\epsilon$$ with $\epsilon>0$ sufficiently small, if and only if there exists an extension 
     \begin{equation}\label{subdes}
         0 \to \mathcal{O}_X^{\oplus k'}  \to E' \to F' \to 0 
     \end{equation}
     and a commutative diagram 
     \begin{equation}\label{dia1}
        \begin{diagram}
        \node{0} \arrow{e}  \node{W} \arrow{e} \arrow{s} \node{E'} \arrow{e} \arrow{s} \node{F'} \arrow{e} \arrow{s} \node{0}\\
        \node{0} \arrow{e}  \node{\mathcal{O}_X^{\oplus k}} \arrow{e}  \node{E} \arrow{e}  \node{F} \arrow{e}  \node{0}
    \end{diagram}
    \end{equation}
    such that $0\neq F' \subset F$,  $\mu_H(F')=\mu_H(F)$, and 
    \begin{equation}\label{condi}
        \left(\frac{\chi(E')}{n'}-\frac{\chi(E)}{n}\right)+\left(\frac{1}{n-k} \left( \chi(E)-n\chi(\mathcal{O}_X)\right)-\frac{\epsilon}{n-k}\right)\left(\frac{k'}{n'}-\frac{k}{n}\right)>0.
    \end{equation}
\end{Proposition}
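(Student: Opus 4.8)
The plan is to upgrade the case analysis of Proposition \ref{E-stable} to the strictly semistable setting, where the stability margin $\delta$ of \eqref{Fstable} is no longer available and must be replaced by the plain inequality $\mu_H(F')\le\mu_H(F)$. As in that proof, any coherent subsystem $(E',V')\subset(E,V)$ with $V'=V\cap H^0(E')$ fits, via pull-back, into the diagram \eqref{dia1} with $W\subset\mathcal O_X^{\oplus k}$, $F'\subset F$ and $k'=\dim V'=h^0(W)$; so I must decide, for each such subsystem, the sign of $P^\alpha_{H,(E',V')}(m)-P^\alpha_{H,(E,V)}(m)$ for $m\gg0$. Since the leading ($m^2$) coefficients always agree, this sign is governed by the coefficient of $m$ and, in the borderline situation where that coefficient vanishes, by the constant term.

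For the direction ($\Leftarrow$), I start from the given sub-extension \eqref{subdes} and diagram \eqref{dia1}, so that $W=\mathcal O_X^{\oplus k'}$, whence $c_1(E')=c_1(F')$ and $\operatorname{rk}F'=n'-k'$. The hypothesis $\mu_H(F')=\mu_H(F)$ forces the coefficient of $m$ in $P^\alpha_{H,(E',V')}-P^\alpha_{H,(E,V)}$ to vanish (this is the $\delta=0$ specialization of Case (A) in Proposition \ref{E-stable}), so the difference reduces to a constant, which is exactly the left-hand side of \eqref{condi}. Positivity of that constant then yields $P^\alpha_{H,(E',V')}(m)>P^\alpha_{H,(E,V)}(m)$, so $(E',V')$ destabilizes $(E,V)$ and $(E,V)$ is not $\alpha$-stable.

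For the direction ($\Rightarrow$), I take a destabilizing subsystem $(E',V')$, i.e.\ $P^\alpha_{H,(E',V')}\ge P^\alpha_{H,(E,V)}$, and rerun the five cases. Case (C) ($F'=0$) and the impossibility in Case (E) ($F'=F$, $W=\mathcal O_X^{\oplus k'}$) do not use stability of $F$ and transfer verbatim, relying only on $c_1H>0$, on Lemma \ref{rankdim}, and on the linear independence of the extension classes $e_1,\dots,e_k$. The decisive new input is a sharpening of Lemma \ref{rankdim}: if $W\neq\mathcal O_X^{\oplus k'}$, then $h^0(W)<\operatorname{rk}W$ strictly, since $h^0(W)=\operatorname{rk}W$ would make the nowhere-vanishing sections of $W\subset\mathcal O_X^{\oplus k}$ span a trivial direct summand $\mathcal O_X^{\oplus k'}$ with torsion-free cokernel, forcing $W=\mathcal O_X^{\oplus k'}$. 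With $k'<\operatorname{rk}W$, a direct computation using $\mu_H(F')\le\mu_H(F)$ and $c_1(W)H\le0$ shows that in Cases (B) and (D) the coefficient of $m$ is strictly negative, being controlled by the negative quantity $k'-\operatorname{rk}W$; these cases therefore cannot destabilize. The only surviving possibility is Case (A) with $W=\mathcal O_X^{\oplus k'}$ and $0\neq F'\subsetneq F$, where the coefficient of $m$ is $\le0$ with equality iff $\mu_H(F')=\mu_H(F)$. A strict inequality $\mu_H(F')<\mu_H(F)$ again gives a negative leading coefficient, so the destabilizer must be marginal, $\mu_H(F')=\mu_H(F)$, and the residual constant is exactly the left-hand side of \eqref{condi}, which is then nonnegative. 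For $\epsilon>0$ small and generic (equivalently, $\alpha$ a regular value, so that $\alpha$-semistability coincides with $\alpha$-stability), nonnegativity upgrades to the strict inequality \eqref{condi}, producing the required sub-extension and completing the equivalence.

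The main obstacle I anticipate is the sharpened form of Lemma \ref{rankdim} and its use to obtain strict negativity in Cases (B) and (D) without the stability gap $\delta$: one must argue carefully that $W\neq\mathcal O_X^{\oplus k'}$ genuinely yields $h^0(W)<\operatorname{rk}W$, and then track the cancellations so that the coefficient of $m$ comes out strictly negative rather than merely $\le0$. A secondary subtlety is the boundary case where the constant in \eqref{condi} vanishes; this is precisely the strictly $\alpha$-semistable locus, and it is handled by restricting to regular values of $\alpha$ (small generic $\epsilon$), where it does not occur.
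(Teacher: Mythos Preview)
Your proposal is correct and follows precisely the approach implicit in the paper's two-sentence proof: rerun the case analysis of Proposition~\ref{E-stable} with $\delta=0$ and isolate Case~(A) with $\mu_H(F')=\mu_H(F)$ as the only surviving destabilizer, whose constant term is exactly the left-hand side of \eqref{condi}. Your strict sharpening of Lemma~\ref{rankdim}---that $W\neq\mathcal{O}_X^{\oplus k'}$ forces $h^0(W)<\operatorname{rk}W$---is exactly what is needed to eliminate Cases~(B) and~(D) without the stability margin, and is in fact cleaner than the paper's own handling of Case~(D) in Proposition~\ref{E-stable}, where $c_1(W)\cdot H<0$ is asserted though only $\le 0$ is available from Lemma~\ref{rankdim}.
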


\begin{proof}
    The existence of \eqref{subdes} and the diagram \eqref{dia1} implies that the coherent system $(E,V)$ defined by \eqref{ext} is not $\alpha-$stable. 
     Conversely, if the coherent system $(E,V)$ defined by \eqref{ext} is not $\alpha-$stable, then there exists a diagram \eqref{dia1} for which \eqref{condi} holds. 
\end{proof}

\vspace{3mm}
\begin{Notation} \rm Fix natural numbers $n, k \in \mathbb{N}$ with $k<n$ and Chern classes $c_1$ and $c_2$. Let us denote by $G_L(n ;c_1,c_2,k)$  the moduli space of equivalence classes of coherent systems which are $\alpha$-stable for any $\alpha$  close to the bound  $(c_1Hm+\chi(E)-n\chi(\mathcal{O}_X))/(n-k)$ and denote by $G_L^s(n;c_1,c_2,k)$ the moduli space of equivalence classes of coherent systems $(E,V) \in G_L(n;c_1,c_2,k)$ such that the quotient is $H$-stable (see Proposition \ref{Fsemistable}). 
\end{Notation}

\begin{Theorem} \label{dimGL}
Let $n, k \in \mathbb{N}$ be  natural numbers with $k<n$, $X$  a smooth projective surface and $H$ an ample  divisor on $X$. Fix Chern classes $c_1$ and $c_2 \gg 0$ and  assume that $$((n-k)K_X+c_1)H \leq 0.$$ Then, the space $G_L^s(n;c_1,c_2,k)$ is isomorphic to a Grassmann bundle over $M_H(n-k;c_1,c_2)$. Moreover, it is irreducible with dimension
$$  \dim G_L^s(n;c_1,c_2,k)=(2n-k)c_2 +(1- n(n-k))\chi(\mathcal{O}_X)-k^2-k(\frac{c_1K}{2}+\frac{c_1^2}{2})-(n-k-1)c_1^2.$$
\end{Theorem}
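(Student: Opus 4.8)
The plan is to realize $G_L^s(n;c_1,c_2,k)$ as the total space of a Grassmann bundle via the bijection already established in Propositions \ref{Fsemistable} and \ref{E-stable}. By those results, sending a class $[(E,V)] \in G_L^s$ to the isomorphism class of the quotient $F = E/(V\otimes\mathcal{O}_X)$ gives a well-defined morphism
$$\Phi : G_L^s(n;c_1,c_2,k) \longrightarrow M := M_H(n-k;c_1,c_2),$$
since $F$ is $H$-stable of rank $n-k$ with $c_i(F)=c_i$. Conversely, over a fixed $[F]\in M$, the systems lying above it are the extensions $0\to\mathcal{O}_X^{\oplus k}\to E\to F\to 0$, classified by $\Ext^1(F,\mathcal{O}_X^{\oplus k})=\Ext^1(F,\mathcal{O}_X)\otimes V$, subject to the linear independence of the $k$ extension classes and taken modulo $\Aut(F)\times GL(V)=\mathbb{C}^*\times GL_k$. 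An injective linear map $V^*\to\Ext^1(F,\mathcal{O}_X)$ modulo this action is exactly a $k$-dimensional subspace, so the fibre of $\Phi$ is $\mathrm{Gr}(k,\Ext^1(F,\mathcal{O}_X))$.

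The first key step is to show this fibre has constant dimension. I would prove the vanishings $\Ext^0(F,\mathcal{O}_X)=\Ext^2(F,\mathcal{O}_X)=0$ for every $H$-stable $F$ with the prescribed invariants. Since $c_1H>0$ (Remark \ref{cota c1}) we have $\mu_H(F)=c_1H/(n-k)>0$, and any nonzero $F\to\mathcal{O}_X$ would produce a quotient of $F$ of nonpositive slope, contradicting stability; hence $\Hom(F,\mathcal{O}_X)=0$. By Serre duality $\Ext^2(F,\mathcal{O}_X)\cong H^0(F\otimes K_X)^*=\Hom(K_X^{-1},F)^*$, and a nonzero map $K_X^{-1}\to F$ would give a rank-one subsheaf of slope $-K_XH$; the hypothesis $((n-k)K_X+c_1)H\leq 0$ is precisely $-K_XH\geq\mu_H(F)$, which contradicts stability, so $H^0(F\otimes K_X)=0$. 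Consequently $\dim\Ext^1(F,\mathcal{O}_X)=-\chi(F\otimes K_X)$ depends only on $(n-k,c_1,c_2)$; Riemann–Roch gives $\chi(F\otimes K_X)=(n-k)\chi(\mathcal{O}_X)+\tfrac12(c_1^2+c_1K_X)-c_2$, so every fibre has dimension $u=c_2-(n-k)\chi(\mathcal{O}_X)-\tfrac12(c_1^2+c_1K_X)$.

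With constant rank in hand, I would make the bundle structure precise: choosing a (quasi-)universal family $\mathcal{F}$ on $X\times M$ with $\pi:X\times M\to M$, the vanishing of $\Ext^0$ and $\Ext^2$ on every fibre lets cohomology-and-base-change identify $\mathcal{U}:=\mathcal{E}xt^1_\pi(\mathcal{F},\mathcal{O}_{X\times M})$ as a vector bundle of rank $u$, and then $G_L^s\cong\mathrm{Gr}(k,\mathcal{U})$ as varieties (the universal extension over $\mathrm{Gr}(k,\mathcal{U})$ produces exactly the $\alpha$-stable systems with $H$-stable quotient, by Propositions \ref{Fsemistable} and \ref{E-stable}). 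For $c_2\gg 0$ the space $M_H(n-k;c_1,c_2)$ is non-empty, irreducible and of expected dimension $2(n-k)c_2-(n-k-1)c_1^2-((n-k)^2-1)\chi(\mathcal{O}_X)$ by the theorems of Gieseker–Li and O'Grady; a Grassmann bundle over an irreducible base is irreducible, and adding $\dim M$ to the fibre dimension $k(u-k)$ yields, after collecting the $c_2$, $\chi(\mathcal{O}_X)$, $c_1^2$ and $c_1K_X$ terms, exactly the asserted formula.

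The step I expect to be the \textbf{main obstacle} is making the Grassmann-bundle structure rigorous: one must ensure the relative $\mathcal{E}xt^1$ is genuinely locally free of the claimed rank (which is where the constancy of $\dim\Ext^1(F,\mathcal{O}_X)$ and the vanishing of the neighbouring Ext groups are essential), and one must deal with the possible non-existence of a genuine universal sheaf on $X\times M$, handled by working with a local or twisted universal family, the remaining $GL$-ambiguity not affecting the associated Grassmann bundle. The final dimension bookkeeping and the invocation of the irreducibility and expected-dimension results for $c_2\gg 0$ are then routine.
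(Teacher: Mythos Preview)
Your proposal is correct and follows essentially the same approach as the paper: identify $G_L^s$ with a relative Grassmannian of $k$-planes in $\Ext^1(F,\mathcal{O}_X)$ over $M_H(n-k;c_1,c_2)$, prove the vanishing of $\Hom(F,\mathcal{O}_X)$ and $\Ext^2(F,\mathcal{O}_X)\cong H^0(F\otimes K_X)^*$ via stability together with $c_1H>0$ and the hypothesis $((n-k)K_X+c_1)H\le 0$, deduce that the relative $\mathcal{E}xt^1$ sheaf is locally free of rank $-\chi(F^*)$, and finish with the known irreducibility and expected-dimension results for $M_H(n-k;c_1,c_2)$ when $c_2\gg 0$. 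You even anticipate the same technical point the paper raises, namely the possible absence of a global universal sheaf and the need to work locally (or with a quasi-universal family), which does not affect the Grassmann bundle.
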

\begin{proof}
    First of all notice that according to Proposition \ref{E-stable}, $G_L^s(n;c_1,c_2,k)$ can be naturally identified with  the family of extension classes of the form
$$ 0 \to \mathcal{O}_X^{\oplus k} \to E \to F \to 0,$$
where $F$ is an $H$-stable torsion free sheaf with $\rank(F)=n-k$ and  $c_i(F)=c_i$ for $i=1,2$. We will assume that  $n-k$ and $c_1H$ are coprime, in which case there exists a Poincar\'e sheaf 
$$ \mathcal{U} \to M_H(n-k;c_1,c_2) \times X $$
of $H$-stable bundles on $X$ such that for any $t \in M_H(n-k;c_1,c_2) $,
$$ \mathcal{U}|_{\{t\} \times X} \cong F_t$$
with $F_t$ a rank $n-k$, $H$-stable torsion free sheaf with $c_1(F)=c_1$ and $c_2(F)=c_2$ corresponding to $t \in M_H(n-k;c_1,c_2)$.
It is also possible to carry out the same construction without the assumption that $n-k$ and $c_1H$ are coprime, using
only the local existence of a universal sheaf on $M_H(n-k;c_1,c_2)$,  carrying out the constructions locally and showing the independence of the choice of the locally universal sheaf and conclude that the construction glue as a global algebraic object. 

Consider the natural projections
$$ \begin{array}{ccc}
    M_H(n-k;c_1,c_2) \times X  & \mapright{\pi} &  M_H(n-k;c_1,c_2) \\
  \quad  \downarrow{p} & & \\
X .\end{array}$$
Set $\mathcal{E}:=Ext^1_{\pi}(\mathcal{U}, p^*\mathcal{O}_X)$ where $Ext^1_{\pi}(\mathcal{U}, \cdot)$ is the right derived functor of
$$ \mathcal{H}{om}_{\pi}(\mathcal{U}, \cdot):=\pi_{*} \mathcal{H}{om}(\mathcal{U}, \cdot).$$
Notice that for any $t \in M_H(n-k;c_1,c_2)$, 
$$ \dim \Ext^1(\mathcal{U}|_{\{t\} \times X},\mathcal{O}_X)=\dim \Ext^1(F_t,\mathcal{O}_X)=h^1(F_t^*).$$
Since $F_t^*$ is $H$-stable and $c_1H \geq 0$, $h^0(F_t^*)=0$. On the other hand, if $0 \neq h^2(F_t^*)=h^0(F_t(K_X))$, by stability we would have  $((n-k)K_X+c_1)H >0$ which contradicts our assumption. Thus, $h^1(F_t^*)=-\chi(F_t^*)$ and the function $\dim \Ext^1(\mathcal{U}|_{\{t\} \times X},\mathcal{O}_X)$ is constant  on $t \in M_H(n-k;c_1,c_2)$. Therefore, the sheaf $\mathcal{E}$ is locally free of rank $p=-\chi(F_t^*)$  on $M_H(n-k;c_1,c_2)$ and compatible with base change. Finally, define the Grassmann bundle over $M_H(n-k;c_1,c_2)$
$$\mathcal{F}:= Gr(k,\mathcal{E}).$$
Notice that according to the above results, we have the diagramm
$$ \begin{array}{ccc}
    \mathcal{F}  & \mapright{g} &  M_H(n-k;c_1,c_2) \\
  \quad  \downarrow{f} & & \\
G_L^s(n;c_1,c_2,k) \end{array}$$
with fibers $g^{-1}(F)$ over $F \in M_H(n-k;c_1,c_2)$ identified with a non-empty open subset of $Grass(k,\Ext^1(F,\mathcal{O}_X))$ and by Proposition \ref{E-stable} and Proposition \ref{Fsemistable}, $\mathcal{F}$ is canonically identified with $G_L^s(n;c_1,c_2,k)$. Hence, 
$$ \begin{array}{r l}  
   \dim G_L^s(n;c_1,c_2,k) =  & \dim \mathcal{F}  \\
     = & kp-k^2+\dim M_H(n-k;c_1,c_2).
\end{array}$$
The assumption $c_2 \gg 0$, implies that the moduli space $M_H(n-k;c_1,c_2)$ is irreducible and has the expected dimension equal to 
$$ \dim M_H(n-k;c_1,c_2)=2(n-k)c_2-(n-k-1)c_1^2-((n-k)^2-1)\chi(\mathcal{O}_X).$$
By Riemman-Roch Theorem $p=-\chi(F_t^*)=- (n-k)\chi(\mathcal{O}_X)-\frac{c_1K}{2}-\frac{c_1^2}{2}+c_2$. Putting altogether we get $G_L^s(n;c_1,c_2,k)$ is irreducible with dimension 
$$(2n-k)c_2 +(1- n(n-k))\chi(\mathcal{O}_X)-k^2-k(\frac{c_1K_X}{2}+\frac{c_1^2}{2})-(n-k-1)c_1^2.$$
\end{proof}

\begin{Remark} \rm 
    Notice that in Theorem \ref{dimGL} we can omit the hypothesis $c_2\gg0$ if the moduli space $M_H(n-k;c_1,c_2)$ is irreducible, and of the expected dimension. In particular, for anticanonical rational surfaces we have that  the moduli space $M_H(n-k;c_1,c_2)$  has the expected dimension.  
\end{Remark}
In a more general setting, we can obtain a lower bound for the dimension of $G_L^s(n;c_1,c_2,k)$. 

\begin{Theorem} \label{dimGL2}
Let $n, k \in \mathbb{N}$ be  natural numbers with $k<n$, $X$  a smooth projective surface and $H$ an ample  divisor on $X$. Fix Chern classes $c_1$ and $c_2 \gg 0$.  Then, the space $G_L^s(n;c_1,c_2,k)$ is irreducible and isomorphic to a Grassmann bundle over an open set of  $M_H(n-k;c_1,c_2)$ and has dimension
$$  \dim G_L^s(n;c_1,c_2,k) \geq (2n-k)c_2 +(1- n(n-k))\chi(\mathcal{O}_X)-k^2-k(\frac{c_1K}{2}+\frac{c_1^2}{2})-(n-k-1)c_1^2.$$
\end{Theorem}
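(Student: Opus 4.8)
The plan is to repeat the construction of Theorem \ref{dimGL} essentially verbatim, but to replace the global local-freeness of the relative $\Ext$-sheaf by a semicontinuity argument that only controls it over a dense open subset of the base. By Proposition \ref{E-stable} and Proposition \ref{Fsemistable}, the points of $G_L^s(n;c_1,c_2,k)$ are in natural bijection with extension classes $0 \to \mathcal{O}_X^{\oplus k} \to E \to F \to 0$ in which $F \in M_H(n-k;c_1,c_2)$ is $H$-stable with $c_1(F)=c_1$ and $c_2(F)=c_2$. Since $c_2 \gg 0$, the moduli space $M_H(n-k;c_1,c_2)$ is irreducible of the expected dimension $2(n-k)c_2 - (n-k-1)c_1^2 - ((n-k)^2-1)\chi(\mathcal{O}_X)$. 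As before I would work with the relative sheaf $\mathcal{E} := Ext^1_{\pi}(\mathcal{U}, p^*\mathcal{O}_X)$ on $M_H(n-k;c_1,c_2)$, using either a global Poincar\'e sheaf (assuming $n-k$ and $c_1H$ coprime) or a locally universal sheaf with a gluing argument.

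The essential difference is that we no longer assume $((n-k)K_X+c_1)H \leq 0$, which in Theorem \ref{dimGL} was precisely what forced $h^2(F_t^*)=h^0(F_t(K_X))=0$ for every $t$ and hence made $\dim \Ext^1(F_t,\mathcal{O}_X)=-\chi(F_t^*)=p$ constant. Without it, stability together with $c_1H \geq 0$ still gives $h^0(F_t^*)=0$, so by Serre duality and Riemann--Roch $\dim \Ext^1(F_t,\mathcal{O}_X)=h^1(F_t^*)=h^0(F_t(K_X))+p \geq p$; but now $h^0(F_t(K_X))$ may be strictly positive on a proper closed subset. I would therefore invoke the semicontinuity theorem: the function $t \mapsto \dim \Ext^1(F_t,\mathcal{O}_X)$ is upper semicontinuous, so it attains its minimal value $r \geq p$ on a nonempty open set $U \subset M_H(n-k;c_1,c_2)$. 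On $U$ the cohomology-and-base-change theorem applies, and $\mathcal{E}|_U$ is locally free of rank $r$ and compatible with base change, exactly as in Theorem \ref{dimGL}.

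Over $U$ I would then form the Grassmann bundle $Gr(k,\mathcal{E}|_U)$; for $c_2 \gg 0$ one has $r \geq p \geq k$, so the Grassmannian fibers are nonempty. By Proposition \ref{E-stable} and Proposition \ref{Fsemistable} this bundle is canonically identified with the open subset of $G_L^s(n;c_1,c_2,k)$ lying over $U$. Since $U$ is open and dense in the irreducible variety $M_H(n-k;c_1,c_2)$ and the Grassmann bundle has irreducible fibers, $Gr(k,\mathcal{E}|_U)$ is irreducible, and as a dense open subset it forces $G_L^s(n;c_1,c_2,k)$ to be irreducible. Its dimension is $\dim M_H(n-k;c_1,c_2)+k(r-k)$. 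Substituting the expected dimension of $M_H(n-k;c_1,c_2)$ together with $r \geq p = c_2-(n-k)\chi(\mathcal{O}_X)-\frac{c_1K_X}{2}-\frac{c_1^2}{2}$ from Riemann--Roch and simplifying gives exactly the asserted lower bound, with equality precisely when $r=p$, i.e. when $h^0(F_t(K_X))$ vanishes generically (which is what the extra hypothesis of Theorem \ref{dimGL} guaranteed, and which is why there one obtains equality).

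The main obstacle I expect is twofold. First, one must ensure that the open part of $G_L^s(n;c_1,c_2,k)$ lying over $U$ is genuinely dense, i.e. that no stratum over $M_H(n-k;c_1,c_2)\setminus U$ — where the jump in $h^0(F_t(K_X))$ enlarges the Grassmannian fibers — contributes a component of dimension equalling or exceeding $\dim M_H + k(r-k)$, which would contradict irreducibility; concretely one needs $k(r'-r) < \operatorname{codim}(S)$ on each stratum $S$ where the $\Ext^1$-dimension equals $r'>r$, an estimate that should hold for $c_2 \gg 0$ because the jumping locus acquires large codimension. Second, one must verify the hypotheses of cohomology and base change so that $\mathcal{E}|_U$ is locally free of the stated rank and the fiberwise identification with $\Ext^1(F_t,\mathcal{O}_X)$ is correct. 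Once the rank bound $r \geq p$ and the density of the $U$-part are established, the dimension computation is the same routine Riemann--Roch bookkeeping as in Theorem \ref{dimGL}.
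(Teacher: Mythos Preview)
Your approach is essentially the same as the paper's: the paper applies upper semicontinuity to $t\mapsto h^2(\mathcal{H}_t)=h^2(F_t^*)$, sets $n_0=\min h^2(F_t^*)$, and works over the open set $M_H^0$ where this minimum is attained, so that $\mathcal{E}$ is locally free there of rank $-\chi(F_t^*)+n_0\geq -\chi(F_t^*)$; since $h^0(F_t^*)=0$ throughout, this is exactly your $r=h^1(F_t^*)\geq p$, and the rest of the argument is identical. The concern you flag about strata over $M_H\setminus U$ possibly contributing extra components is not addressed in the paper's proof either.
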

\begin{proof}
We will keep the notations introduced in the proof of Theorem \ref{dimGL}. In addition, denote by $\mathcal{H}:= \mathcal{H}{om}(\mathcal{U}, p^* \mathcal{O}_X)$. First of all, notice that the map
\[ \begin{array}{ccc} M_H(n-k;c_1,c_2) & \to & \mathbb{N} \\ 
t &\to & h^2(X \times \{ t\}, \mathcal{H}_t) \end{array} \]
is upper semicontinuous on $M_H(n-k;c_1,c_2)$. Let us denote by \[n_0=\min_{t \in M_H(n-k;c_1,c_2)}\{h^2( \mathcal{H}_t)\}.\] By semicontinuity, there exist a non empty open subset $M_H^0 \subset M_H(n-k;c_1,c_2) $ such that for any $t \in M_H^0$, $h^2( \mathcal{H}_t)$ is constant equal to $n_0$.  Moreover,  by stability $h^0( \mathcal{H}_t)=0$. Hence, over $M_H^0$, $\mathcal{E}=Ext^1_{\pi}(\mathcal{U}, p^*\mathcal{O}_X)$ is locally free of rank $p=-\chi(F_t^*)+n_0 \geq -\chi(F_t^*)$. Now, arguing step by step as in the proof of Theorem \ref{dimGL}, we get that $ G_L^s(n;c_1,c_2,k)$ is irreducible and 
$$  \dim G_L^s(n;c_1,c_2,k) \geq (2n-k)c_2 +(1- n(n-k))\chi(\mathcal{O}_X)-k^2-k(\frac{c_1K}{2}+\frac{c_1^2}{2})-(n-k-1)c_1^2.$$
\end{proof}

We will end the section with one example where the moduli space $G_L^s(n;c_1,c_2,k)$ is generically smooth. To this end, we need to recall the following definition
\begin{Definition}
    \label{steiner}A rank $r$ Steiner bundle on $\mathbb{P}^2$ is a vector bundle $F$ given by an exact sequence of the following type
    \[ 0 \longrightarrow \mathcal{O}_X(-1)^t \longrightarrow \mathcal{O}_X^{t+r} \longrightarrow F.  \]
    In particular, $c_1(F)=t$ and $c_2(F)= {t+1\choose 2}$. 
\end{Definition}

\vspace{3mm}
It is known that being Steiner is an open condition and that if $\rank(F)< \frac{c_1(F)(1 +\sqrt{5})}{2}$, then the set of Steiner bundles $F$ defines an open dense subset inside the corresponding moduli space $M_{\mathbb{P}^2}(r;c_1(F),c_2(F))$ (see \cite{Cos}). 

\begin{Proposition} \label{smooth}
Let $X=\mathbb{P}^2$ and consider $n,k,c_1 \in \mathbb{Z}_{>0}$ with $0<n-k < c_1 ( \frac{1+\sqrt{5}}{2})$. Then $G_L^s(n;c_1,{c_1+1 \choose 2},k)$ is generically smooth. 
\end{Proposition}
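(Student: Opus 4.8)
The plan is to deduce everything from the Grassmann bundle description already established. By Proposition \ref{E-stable} and Proposition \ref{Fsemistable}, together with the construction carried out in the proof of Theorem \ref{dimGL2}, the space $G_L^s(n;c_1,c_2,k)$ with $c_2=\binom{c_1+1}{2}$ is canonically an open subscheme of a Grassmann bundle $Gr(k,\mathcal{E})$ over an open subset $M^{0}$ of $M:=M_H(n-k;c_1,\binom{c_1+1}{2})$. Since a Grassmann bundle over a smooth base is smooth and open subschemes of smooth schemes are smooth, it suffices to prove that $M$ is non-empty and that its generic point is smooth; the generic point of $G_L^s$ then lies over $M^{0}$ and is a smooth point. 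I stress that here $c_2$ is \emph{fixed} rather than $\gg 0$, so I cannot quote the irreducibility and dimension parts of Theorems \ref{dimGL}--\ref{dimGL2} verbatim; instead I would invoke the fact, special to $X=\mathbb{P}^2$, that the moduli space $M$ of $H$-stable torsion free sheaves is irreducible and smooth whenever it is non-empty.

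Smoothness of $M$ is the easy half. At an $H$-stable point $[F]\in M$ the obstruction space is $\Ext^2(F,F)$ (the trace-free part suffices). By Serre duality on $\mathbb{P}^2$, where $K_{\mathbb{P}^2}=\mathcal{O}(-3)$, one has $\Ext^2(F,F)\cong \Hom(F,F(-3))^{\vee}$. A nonzero homomorphism $F\to F(-3)$ would have image simultaneously a quotient of the stable sheaf $F$, hence of slope $>\mu_H(F)$, and a subsheaf of the stable sheaf $F(-3)$, hence of slope $<\mu_H(F)-3$, which is absurd. Thus $\Ext^2(F,F)=0$ and $M$ is smooth at every point.

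The substance is the non-emptiness of $M$, and this is where both hypotheses enter. First I would record two translations. Writing $r=n-k$ and $\mu=c_1/r$, a Riemann--Roch computation shows that the discriminant of a rank-$r$ sheaf with $c_1$ and $c_2=\binom{c_1+1}{2}$ equals $\Delta=\tfrac12\mu(\mu+1)$; equivalently this is exactly the condition $\chi(\mathcal{O}(1),F)=0$, placing the numerical invariants of $F$ precisely on the Drezet--Le Potier branch attached to the exceptional line bundle $\mathcal{O}(1)$. Second, setting $\tau=\tfrac{1+\sqrt5}{2}$, the hypothesis $n-k<c_1\tau$ is equivalent to $\mu>1/\tau=\tfrac{\sqrt5-1}{2}$, i.e. to $\Delta>\tfrac12$, and also to $b/a<\tau^2$ where $a=c_1$ and $b=c_1+(n-k)$. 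With these in hand I would exhibit a stable sheaf with the required invariants as a general \emph{Fibonacci (Steiner) bundle}, the cokernel
\[
0\to \mathcal{O}(-1)^{\oplus c_1}\xrightarrow{\ \psi\ }\mathcal{O}^{\oplus (c_1+n-k)}\to F\to 0,
\]
whose total Chern class $(1-H)^{-c_1}$ yields exactly $c_1(F)=c_1$, $c_2(F)=\binom{c_1+1}{2}$ and $\rank F=n-k$. For $\psi$ general the cokernel is locally free (the rank-drop locus is empty as soon as $n-k\ge 2$, and for $n-k=1$ one gets a twisted ideal sheaf), and the key point is that a general such $F$ is $H$-stable precisely when $b/a<\tau^2$, that is, under our hypothesis. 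This gives $M\neq\emptyset$, and by the previous paragraph $M$ is then smooth (and irreducible), so $G_L^s$ is generically smooth.

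The main obstacle is exactly this last stability statement, and it is the reason the golden ratio appears; I would establish it in one of two equivalent ways. Either invoke the Drezet--Le Potier existence criterion: $\delta_{\mathrm{DLP}}(\mu)\ge\tfrac12$ everywhere, with the excess concentrated in bumps over the exceptional slopes, and the slopes $3/5,\,8/13,\,21/34,\dots$ of the exceptional bundles of Fibonacci--Markov ranks $5,13,34,\dots$ accumulate at $1/\tau$ from below; the parabola $\Delta=\tfrac12\mu(\mu+1)$ clears every such bump exactly when $\mu>1/\tau$ and falls below them when $\mu\le 1/\tau$, being asymptotically tangent to the Drezet--Le Potier curve at $\mu=1/\tau$, which is what forces the threshold to be the golden ratio. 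Or, more self-containedly, test the general $F$ above against its potentially destabilizing subsheaves $F'\subset F$; the worst case is a sub generated by global sections, and the numerical inequality needed to exclude it is exactly $b/a<\tau^2$. Either route is delicate, and the comparison with the Fibonacci chain of exceptional bundles (equivalently the direct stability check) is the genuinely hard step. Finally I would note that $\dim M=c_1^2+(n-k)c_1-(n-k)^2+1>0$ under the hypothesis, ensuring that what we have produced is a genuine stable locus rather than a single exceptional bundle; once this is in place the reduction of the first two paragraphs finishes the proof at once.
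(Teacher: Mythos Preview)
Your route is genuinely different from the paper's. The paper does not pass through the Grassmann-bundle picture at all; it works directly with He's deformation theory of coherent systems. By \cite{He}, Corollaire~3.14, generic smoothness of $G_L^s$ follows once $\Ext^2((E,V),(E,V))=0$ at a generic point, and He's long exact sequence (\cite{He}, Corollaire~1.6) sandwiches this group between $\Hom(V,H^1(E))$ and $\Ext^2(E,E)$. For $(E,V)$ generic the quotient $F$ is a stable Steiner bundle (this is where the paper, like you, uses that $c_2=\binom{c_1+1}{2}$ and that the golden-ratio inequality is exactly the stability threshold for generic Steiner bundles), so $H^1(F)=0$ and hence $H^1(E)=0$; the vanishing of $\Ext^2(E,E)$ is then read off from the two short exact sequences (\ref{ex4})--(\ref{ex5}). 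Your analysis of non-emptiness of $M$ and of why the golden ratio appears is correct and matches what the paper uses implicitly; your $\Ext^2(F,F)=\Hom(F,F(-3))^\vee=0$ is likewise fine.

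The gap is in the step ``a Grassmann bundle over a smooth base is smooth, hence $G_L^s$ is generically smooth''. Theorems \ref{dimGL} and \ref{dimGL2} assert an isomorphism, but their proofs only supply a bijection on closed points: the identification is made via Propositions \ref{Fsemistable} and \ref{E-stable}, which are statements about individual extensions, not about families or scheme structures. The scheme structure on $G_L^s$ comes from He's GIT construction, and a bijective morphism from a smooth variety does not force the target to be reduced, let alone smooth. To upgrade the bijection to a scheme isomorphism you would have to compare tangent spaces (i.e.\ compute $\Ext^1((E,V),(E,V))$ and match it with the tangent space of the Grassmann bundle) or, equivalently, show the obstruction $\Ext^2((E,V),(E,V))$ vanishes so that $G_L^s$ has the expected dimension at the point in question. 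Either way you are driven back to exactly the computation the paper performs. What your approach buys, once that gap is filled, is a cleaner geometric picture and smoothness on a whole open set rather than just at the generic point; what the paper's approach buys is that it is self-contained within He's framework and does not rely on the (unproven in the paper) scheme-theoretic content of Theorems \ref{dimGL}--\ref{dimGL2}.
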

\begin{proof}
According to \cite{He}; Corollaire 3.14, it is enough to see that for a generic point $(E,V) \in G_L^s(n;c_1,{c_1+1 \choose 2},k)$, 
\[ \Ext^2((E,V),(E,V))=0.\]
Recall that under the assumption $n-k < c_1 ( \frac{1+\sqrt{5}}{2})$, the moduli space $M_{\mathbb{P}^2}(n-k,c_1,{c_1+1 \choose 2} )$ is irreducible and Steiner bundles $F \in M_{\mathbb{P}^2}(n-k,c_1,{c_1+1 \choose 2} )$ define an open dense subset. Thus, according to Proposition \ref{Fsemistable} and Proposition \ref{E-stable}, we can assume that a generic $(E,V) \in G_L^s(n;c_1,{c_1+1 \choose 2},k)$ defines an exact sequence 
\begin{equation}
     \label{ex4} 0\rightarrow \mathcal{O}_{\mathbb{P}^2}^{\oplus k}\rightarrow E\rightarrow F\rightarrow 0 \end{equation}
where $F$ is a stable Steiner bundle, that is, $F$ is given by the exact sequence 
\begin{equation} \label{ex5} 0\rightarrow \mathcal{O}_{\mathbb{P}^2}(-1)^{\oplus c_1}\rightarrow \mathcal{O}_{\mathbb{P}^2}^{\oplus n-k+c_1} \rightarrow F\rightarrow 0. \end{equation}
In particular, $H^1(F)=0$ and thus $H^1(E)=0$. 

On the other hand, by \cite{He}; Corollaire 1.6, we have the long exact sequence 
$$ \to \Hom(V, H^1(E)) \to \Ext^2((E,V),(E,V)) \to \Ext^2(E,E) \to .  $$
Since $H^1(E)=0$, we have $\Hom(V, H^1(E))=0$. Hence, it only remains to prove that  $\Ext^2(E,E)=0$ which easily follows from the exact sequences (\ref{ex4}) and (\ref{ex5}). 
\end{proof}

\begin{center} References \end{center}
\begin{biblist}

\bib{Bh}{article}{
   author={Bhosle, Usha N.},
   title={Coherent systems on a nodal curve},
   conference={
      title={Moduli spaces and vector bundles},
   },
   book={
      series={London Math. Soc. Lecture Note Ser.},
      volume={359},
      publisher={Cambridge Univ. Press, Cambridge},
   },
   isbn={978-0-521-73471-4},
   date={2009},
   pages={437--455},
   review={\MR{2537077}},
}

\bib{BGMN}{article}{
    AUTHOR = {Bradlow, S. B.},
    AUTHOR = {Garc\'ia-Prada 0.},
    AUTHOR = {Mu\~noz, V.},
    AUTHOR = {Newstead, P. E.},
     TITLE = {Coherent systems and {B}rill-{N}oether theory},
   JOURNAL = {Internat. J. Math.},
  JOURNAL = {International Journal of Mathematics},
    VOLUME = {14},
      YEAR = {2003},
    NUMBER = {7},
     PAGES = {683--733},
      ISSN = {0129-167X,1793-6519},
       DOI = {10.1142/S0129167X03002009},
       URL = {https://doi.org/10.1142/S0129167X03002009},
}

\bib{B}{article}{
   author={Bradlow, S. B.},
   title={Coherent systems: a brief survey},
   note={With an appendix by H. Lange},
   conference={
      title={Moduli spaces and vector bundles},
   },
   book={
      series={London Math. Soc. Lecture Note Ser.},
      volume={359},
      publisher={Cambridge Univ. Press, Cambridge},
   },
   isbn={978-0-521-73471-4},
   date={2009},
   pages={229--264},
   review={\MR{2537071}},
}

\bib{BGMN1}{article}{
    AUTHOR = {Bradlow, Steven B.}
    AUTHOR = {Garc\'ia-Prada, Oscar},
     TITLE = {An application of coherent systems to a {B}rill-{N}oether
              problem},
   JOURNAL = {J. Reine Angew. Math.},
  JOURNAL = {Journal f\"ur die Reine und Angewandte Mathematik. [Crelle's
              Journal]},
    VOLUME = {551},
      YEAR = {2002},
     PAGES = {123--143},
      ISSN = {0075-4102,1435-5345},
       DOI = {10.1515/crll.2002.079},
       URL = {https://doi.org/10.1515/crll.2002.079},
}

\bib{BGMMN}{article}{
    AUTHOR = {Bradlow, S. B.}
AUTHOR = { Garc\'ia-Prada, O.}
AUTHOR = {Mercat, V. }
AUTHOR = {Mu\~noz, V.}
AUTHOR = {Newstead, P. E.},
     TITLE = {On the geometry of moduli spaces of coherent systems on
              algebraic curves},
   JOURNAL = {Internat. J. Math.},
  FJOURNAL = {International Journal of Mathematics},
    VOLUME = {18},
      YEAR = {2007},
    NUMBER = {4},
     PAGES = {411--453},
      ISSN = {0129-167X,1793-6519},
   MRCLASS = {14H60 (14D20 14H51)},
  MRNUMBER = {2325354},
       DOI = {10.1142/S0129167X07004151},
       URL = {https://doi.org/10.1142/S0129167X07004151},
}

\bib{BGMMN2}{article}{
   author={Bradlow, S. B.},
   author={Garc\'ia-Prada, O.},
   author={Mercat, V.},
   author={Mu\~noz, V.},
   author={Newstead, P. E.},
   title={Moduli spaces of coherent systems of small slope on algebraic
   curves},
   journal={Comm. Algebra},
   volume={37},
   date={2009},
   number={8},
   pages={2649--2678},
   issn={0092-7872},
   review={\MR{2543511}},
   doi={10.1080/00927870902747464},
}

\bib{CHCH}{article}{
    AUTHOR = {Choi, Jinwon}
    AUTHOR = {Chung, Kiryong},
     TITLE = {Moduli spaces of {$\alpha$}-stable pairs and wall-crossing on
              {$\Bbb{P}^2$}},
   JOURNAL = {J. Math. Soc. Japan},
  JOURNAL = {Journal of the Mathematical Society of Japan},
    VOLUME = {68},
      YEAR = {2016},
    NUMBER = {2},
     PAGES = {685--709},
      ISSN = {0025-5645,1881-1167},
       DOI = {10.2969/jmsj/06820685},
       URL = {https://doi.org/10.2969/jmsj/06820685},
}

\bib{Cos}{article}{
    AUTHOR = {Coskun, Izzet},
    AUTHOR = {Huizenga, Jack},
    AUTHOR = {Smith, Geoffrey },
     TITLE = {Stability and Cohomology of Kernel Bundles on  $\mathbb{P}^n$},
   JOURNAL = {Michigan Mathematical Journal Vol. 75, Issue 1, pgs 173-198},
      YEAR = {2025},
   }

\bib{CHN}{article}{
    AUTHOR = {Coskun, Izzet},
    AUTHOR = {Huizenga, Jack},
    AUTHOR = {Raha, Neelarnab},
     TITLE = {Brill-{N}oether theory on {$\Bbb P^2$} for bundles with many
              sections},
   JOURNAL = {Int. Math. Res. Not. IMRN},
  JOURNAL = {International Mathematics Research Notices. IMRN},
      YEAR = {2025},
    NUMBER = {6},
     PAGES = {Paper No. rnaf064, 25},
       DOI = {10.1093/imrn/rnaf064},
       URL = {https://doi.org/10.1093/imrn/rnaf064},
}

\bib{CMR}{article}{
    AUTHOR = {Costa, L}
    AUTHOR = {Mac\'ias Tarr\'io, I}
    AUTHOR = {Roa-Leguizam\'on, L.},
     TITLE = {Clifford's theorem for coherent systems on surfaces with
              {K}odaira dimension {$\kappa \le 0$}},
   JOURNAL = {Bol. Soc. Mat. Mex. (3)},
  JOURNAL = {Bolet\'in de la Sociedad Matem\'atica Mexicana. Third Series},
    VOLUME = {31},
      YEAR = {2025},
    NUMBER = {3},
     PAGES = {Paper No. 109},
      ISSN = {1405-213X,2296-4495},
   MRCLASS = {14J60 (14D20 14H60)},
  MRNUMBER = {4948631},
       DOI = {10.1007/s40590-025-00799-5},
       URL = {https://doi.org/10.1007/s40590-025-00799-5},
}

\bib{CM}{article}{
    AUTHOR = {Costa, L.}
    AUTHOR = {Mir\'o-Roig, R. M.}
     TITLE = {Brill-{N}oether theory on {H}irzebruch surfaces},
   JOURNAL = {J. Pure Appl. Algebra},
  JOURNAL = {Journal of Pure and Applied Algebra},
    VOLUME = {214},
      YEAR = {2010},
    NUMBER = {9},
     PAGES = {1612--1622},
       DOI = {10.1016/j.jpaa.2009.12.006},
       URL = {https://doi.org/10.1016/j.jpaa.2009.12.006},
}

\bib{He}{article} {
    AUTHOR = {He, Min},
     TITLE = {Espaces de modules de syst\`emes coh\'erents},
   JOURNAL = {Internat. J. Math.},
  JOURNAL = {International Journal of Mathematics},
    VOLUME = {9},
      YEAR = {1998},
    NUMBER = {5},
     PAGES = {545--598},
      ISSN = {0129-167X,1793-6519},
       DOI = {10.1142/S0129167X98000257},
       URL = {https://doi.org/10.1142/S0129167X98000257},
}

\bib{HL}{book}{
   author={Huybrechts, Daniel},
   author={Lehn, Manfred},
   title={The geometry of moduli spaces of sheaves},
   series={Cambridge Mathematical Library},
   edition={2},
   publisher={Cambridge University Press, Cambridge},
   date={2010},
   pages={xviii+325},
   isbn={978-0-521-13420-0},
   doi={10.1017/CBO9780511711985},
}

\bib{KN}{article}{
    AUTHOR = {King, A. D.}
    AUTHOR ={Newstead, P. E.},
     TITLE = {Moduli of {B}rill-{N}oether pairs on algebraic curves},
   JOURNAL = {Internat. J. Math.},
  JOURNAL = {International Journal of Mathematics},
    VOLUME = {6},
      YEAR = {1995},
    NUMBER = {5},
     PAGES = {733--748},
      ISSN = {0129-167X,1793-6519},
       DOI = {10.1142/S0129167X95000316},
       URL = {https://doi.org/10.1142/S0129167X95000316},
}

\bib{LN}{article}{
    AUTHOR = {Lange, H.}
    AUTHOR = { Newstead, P. E.},
     TITLE = {Coherent systems of genus 0},
   JOURNAL = {Internat. J. Math.},
  FJOURNAL = {International Journal of Mathematics},
    VOLUME = {15},
      YEAR = {2004},
    NUMBER = {4},
     PAGES = {409--424},
      ISSN = {0129-167X,1793-6519},
   MRCLASS = {14H60 (14D20)},
  MRNUMBER = {2069686},
MRREVIEWER = {Holger\ Brenner},
       DOI = {10.1142/S0129167X04002326},
       URL = {https://doi.org/10.1142/S0129167X04002326},
}

\bib{LePotier}{article}{
    AUTHOR = {Le Potier, Joseph},
     TITLE = {Syst\`emes coh\'erents et structures de niveau},
   JOURNAL = {Ast\'erisque},
  JOURNAL = {Ast\'erisque},
    NUMBER = {214},
      YEAR = {1993},
     PAGES = {143},
}

\bib{MTR}{article}{
AUTHOR = {Mata-Gutiérrez, O.}, 
AUTHOR = {Roa-Leguizamón, L.},
AUTHOR ={Torres-Lopez, H},
TITLE = {On the moduli space of coherent systems of type (2,c1,c2,2) on the projective plane},
JOURNAL = {Communications in Algebra}, 
YEAR = {2025}
PAGES = {1–24},
DOI = {doi.org/10.1080/00927872.2025.2537277},
}

\bib{TN}{article}{
    AUTHOR = {Newstead, Peter}
    AUTHOR = {Teixidor i Bigas, Montserrat},
     TITLE = {Coherent systems on the projective line},
   JOURNAL = {Q. J. Math.},
  JOURNAL = {The Quarterly Journal of Mathematics},
    VOLUME = {72},
      YEAR = {2021},
    NUMBER = {1-2},
     PAGES = {115--136},
       DOI = {10.1093/qmathj/haaa024},
       URL = {https://doi.org/10.1093/qmathj/haaa024},
}

\bib{N}{article}{
    AUTHOR = {Newstead, Peter E.},
     TITLE = {Higher rank {B}rill-{N}oether theory and coherent systems open
              questions},
   JOURNAL = {Proyecciones},
  JOURNAL = {Proyecciones. Journal of Mathematics},
    VOLUME = {41},
      YEAR = {2022},
    NUMBER = {2},
     PAGES = {449--480},
}

\bib{N1}{article}{
    AUTHOR = {Newstead, P. E.},
     TITLE = {Existence of {$\alpha$}-stable coherent systems on algebraic
              curves},
 BOOKTITLE = {Grassmannians, moduli spaces and vector bundles},
    SERIES = {Clay Math. Proc.},
    VOLUME = {14},
     PAGES = {121--139},
 PUBLISHER = {Amer. Math. Soc., Providence, RI},
      YEAR = {2011},
      ISBN = {978-0-8218-5205-7},
}

\bib{RV}{article}{
    AUTHOR = {Raghavendra, Nyshadham},
    AUTHOR = { Vishwanath, Periyapatna A.},
     TITLE = {Moduli of pairs and generalized theta divisors},
   JOURNAL = {Tohoku Math. J. (2)},
  JOURNAL = {The Tohoku Mathematical Journal. Second Series},
    VOLUME = {46},
      YEAR = {1994},
    NUMBER = {3},
     PAGES = {321--340},
      ISSN = {0040-8735,2186-585X},
       DOI = {10.2748/tmj/1178225715},
       URL = {https://doi.org/10.2748/tmj/1178225715},
}

\bib{Th}{article}{
    AUTHOR = {Thaddeus, Michael},
     TITLE = {Stable pairs, linear systems and the {V}erlinde formula},
   JOURNAL = {Invent. Math.},
  JOURNAL = {Inventiones Mathematicae},
    VOLUME = {117},
      YEAR = {1994},
    NUMBER = {2},
     PAGES = {317--353},
      ISSN = {0020-9910,1432-1297},
       DOI = {10.1007/BF01232244},
       URL = {https://doi.org/10.1007/BF01232244},
}

\end{biblist}

 \end{document}